\documentclass[12pt]{amsart}

\usepackage{amssymb, amsfonts,amsmath}
\usepackage{amsthm}
\usepackage{xcolor}
\usepackage{enumerate}

\newtheorem{theorem}{Theorem}
\newtheorem{proposition}[theorem]{Proposition}
\newtheorem{lemma}[theorem]{Lemma}
\newtheorem{conjecture}[theorem]{Conjecture}

\newcommand{\rank}{\text{rank}}

\title[Dehn functions of lattices of type $A_n$ in products]{Dehn functions of higher rank arithmetic groups of type $A_n$ in products of simple Lie groups}
\author{Morgan Cesa}
\date{\today}

\begin{document}
\maketitle
\begin{abstract}
Suppose $\Gamma$ is an arithmetic group defined over a global field $K$, that the $K$-type of $\Gamma$ is $A_n$ with $n \geq 2$, and that the ambient semisimple group that contains $\Gamma$ as a lattice has at least two noncocompact factors.
We use results from Bestvina-Eskin-Wortman and Cornulier-Tessera to show that $\Gamma$ has a polynomially bounded Dehn function.
\end{abstract}
\section{Introduction}
Let $K$ be a global field, and $S$ a finite, nonempty set of inequivalent valuations on $K$. Denote by $\mathcal{O}_S$ the ring of $S$-integers in $K$, and let $K_v$ be the completion of $K$ with respect to $v \in S$. Let $\mathbf{G}$ be a noncommutative absolutely almost simple $K$-isotropic $K$-group, and let $G = \prod_{v\in S} \mathbf{G}(K_v)$. Note that $|S|$ is the number of simple factors of $G$, and that $\mathbf{G}(\mathcal{O}_S)$ is a lattice in $G$ under the diagonal embedding.

If $L$ is a field, the \emph{$L$-rank} of $\mathbf{G}$, denoted $\rank_L(\mathbf{G})$ is the dimension of a maximal torus in $\mathbf{G}(L)$. The \emph{geometric rank} of $G$ is $k(\mathbf{G}, S) = \sum_{v\in S} \rank_{K_v}(\mathbf{G})$. The Lie group $G$ is endowed with a left invariant metric, which we will denote $d_G$. Lubotzky-Mozes-Raghunathan showed that if $k(\mathbf{G}, S) \geq 2$, then the word metric on $\mathbf{G}(\mathcal{O}_S)$ is Lipschitz equivalent to the restriction of $d_G$ to $\mathbf{G}(\mathcal{O}_S)$ \cite{LubotzkyMozesRaghunathan2000}.

The following is a slight generalization of a conjecture due to Gromov \cite{Gromov1993}:
\begin{conjecture}\label{BEWconjecture}
If $k(\mathbf{G},S) \geq 3$, then the Dehn function of $\mathbf{G}(\mathcal{O}_S)$ is quadratic.
\end{conjecture}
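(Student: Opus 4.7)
The plan is to reduce the Dehn function computation to a filling problem in the ambient product space via the Lubotzky-Mozes-Raghunathan theorem cited above, then use reduction-theoretic machinery to transfer fillings back to the arithmetic lattice. Since this is an open conjecture of Gromov, any outline is necessarily speculative; the strategy I would attempt follows the pattern of known partial results, such as Young's proof that $\mathrm{SL}_n(\mathbb{Z})$ has quadratic Dehn function for $n$ sufficiently large.

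First, I would invoke the Lubotzky-Mozes-Raghunathan theorem to equate, up to Lipschitz equivalence, the word metric on $\mathbf{G}(\mathcal{O}_S)$ with the restriction of $d_G$. This reduces the problem to estimating filling areas of loops in $\mathbf{G}(\mathcal{O}_S)$ in terms of the geometry of $G$ acting on the product $X = \prod_{v \in S} X_v$ of symmetric spaces and Bruhat-Tits buildings. Second, I would aim to establish quadratic filling inside a Siegel set: since $X$ has geometric rank at least three, loops in the thick part should admit quadratic fillings via a combing derived from the Iwasawa decomposition, in the spirit of existing higher-rank arguments. Third, I would handle the cuspidal part by decomposing each loop relative to the horospherical stratification coming from the rational parabolic subgroups of $\mathbf{G}$, filling horoball pieces using polynomial combings of unipotent radicals together with Bestvina-Eskin-Wortman style area bounds, and then upgrading these to a quadratic bound using the complementary rank available in $G$.

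The main obstacle is the cuspidal filling. Because $\mathbf{G}(\mathcal{O}_S)$ is not cocompact in $G$, horoballs based at rational parabolics contribute super-quadratic distortion unless compensated by enough transverse geometric directions; the hypothesis $k(\mathbf{G}, S) \geq 3$ is precisely what guarantees that every rational parabolic $\mathbf{P}$ has positive complementary rank, allowing loops that reach deep into a horoball to be radially retracted and then filled in a lower-dimensional stratum. Making this radial retraction quantitatively quadratic, rather than merely polynomial as Bestvina-Eskin-Wortman and Cornulier-Tessera currently provide, is the essential difficulty and is why the conjecture has remained open; I would expect that a successful attack will also need to control the area of loops that shuttle between several distinct cusps, which likely requires genuinely new tools beyond the combing techniques available at present.
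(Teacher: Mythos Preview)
The statement you were asked to address is \emph{Conjecture~\ref{BEWconjecture}}, and the paper does not prove it; it is recorded as an open conjecture attributed to Gromov. What the paper actually proves is the weaker Theorem~\ref{mainthm}: under the additional hypotheses that the $K$-type of $\mathbf{G}$ is $A_n$ with $n\geq 2$ and $|S|\geq 2$, the Dehn function is bounded by a polynomial of degree $3\cdot 2^{n}$, not by a quadratic.

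You correctly recognized that the conjecture is open and framed your answer as a speculative strategy. Your outline is in fact close to the paper's approach to the partial result: the paper uses the $CAT(0)$ geometry of $G$ to fill a loop by a quadratic-area disk in $G$, then invokes the Bestvina--Eskin--Wortman reduction theory (Theorem~\ref{reduction}) to push the disk, parabolic stratum by parabolic stratum, out of each $R_{\mathbf{Q}}(t_m)B_m$ and into a neighborhood of $\mathbf{G}(\mathcal{O}_S)$. The key technical input (Proposition~\ref{parabolicprop}) is that a coarse $2$-manifold inside a parabolic region can be replaced by one in the boundary of that region with area squared (nonmaximal case) or cubed (maximal case), the latter using a Cornulier--Tessera quadratic filling inside $U_{\Phi(I)^+}\widehat{T}_I\mathbf{A}_I(\mathcal{O}_S)$. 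Iterating over the $|\Delta|=n$ strata is what produces the exponent $3\cdot 2^{n}$.

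Your diagnosis of the obstruction is also accurate: each pass through a parabolic region costs a polynomial blow-up in area, and the current machinery gives no mechanism to make the cumulative cost quadratic. There is therefore no genuine error in your proposal, but there is also no proof to compare it to; the honest summary is that you have described, at the level of strategy, the same reduction-theoretic pipeline the paper uses for its polynomial bound, together with the correct identification of why that pipeline does not yet yield the conjectured quadratic bound.
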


Dru\c{t}u showed that if $k(\mathbf{G}, S) \geq 3$, $\rank_K(\mathbf{G}) = 1$, and $S$ contains only archimedean valuations, then the Dehn function of $\mathbf{G}(\mathcal{O}_S)$ is bounded above by the function $x \mapsto x^{2 + \epsilon}$ for any $\epsilon >0 $ \cite{Drutu1998}.

Young showed that if $\mathbf{G}(\mathcal{O}_S)$ is $\mathbf{SL_n}(\mathbb{Z})$ and $n \geq 5$ (i.e. $k(\mathbf{G},S) \geq~4$), then the Dehn function of $\mathbf{G}(\mathcal{O}_S)$ is quadratic \cite{Young2013}. Cohen showed that if $\mathbf{G}(\mathcal{O}_S)$ is $\mathbf{Sp_{2n}}(\mathbb{Z})$  and $n\geq 5$ (i.e. $k(\mathbf{G},S) \geq 5$), then the Dehn function of $\mathbf{G}(\mathcal{O}_S)$ is quadratic \cite{Cohen2014}. 
Bestvina-Eskin-Wortman showed that if $|S| \geq 3$ (that is, $G$ has at least 3 factors, which implies that $k(\mathbf{G},S) \geq 3)$), then the Dehn function of $\mathbf{G}(\mathcal{O}_S)$ is polynomially bounded \cite{BestvinaEskinWortman2013}.

In this paper, we will show:
\begin{theorem}\label{mainthm}
If the $K$-type of $\mathbf{G}$ is $A_n$, $ n \geq 2$, and $|S| \geq 2$, then the Dehn function of $\mathbf{G}(\mathcal{O}_S)$ is bounded by a polynomial of degree $3\cdot2^n$.
\end{theorem}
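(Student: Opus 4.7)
The plan is to adapt the filling framework of Bestvina--Eskin--Wortman (BEW) to the two-factor case, replacing BEW's use of three noncocompact factors by a substitute third direction coming from the fact that the $K_v$-rank of $\mathbf{G}$ is at least $n \geq 2$. Since $k(\mathbf{G}, S) \geq 2n \geq 4$, Lubotzky--Mozes--Raghunathan applies and it suffices to estimate the area of a disk filling an arbitrary loop of $d_G$-length $\ell$ in $\Gamma := \mathbf{G}(\mathcal{O}_S)$.

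Fix a minimal $K$-parabolic $\mathbf{P} \leq \mathbf{G}$ with unipotent radical $\mathbf{U}$ and Levi $\mathbf{L}$. The argument proceeds in two stages. In \emph{Stage~1} one pushes any loop in $\Gamma$, at polynomial cost in area, into a uniform neighborhood of (a translate of) the $S$-arithmetic solvable group $\mathbf{P}(\mathcal{O}_S)$. This is the step for which BEW use three noncocompact factors; I would instead run their cover-and-deform argument with the two genuine factors $\mathbf{G}(K_{v_1}) \times \mathbf{G}(K_{v_2})$ and draw the ``third direction'' from a rank-$2$ flat inside a single $\mathbf{G}(K_v)$, which is available because $\rank_{K_v}(\mathbf{G}) \geq n \geq 2$. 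In \emph{Stage~2} the resulting loop, now in $\mathbf{P}(\mathcal{O}_S)$, is filled using the Cornulier--Tessera bounds on Dehn functions of $S$-arithmetic solvable groups.

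For Stage 2, note that in type $A_n$ the unipotent radical $\mathbf{U}$ is nilpotent of class $n$, and the natural way to fill loops in $\mathbf{U}(\mathcal{O}_S)$ is to induct along the lower central series: a loop at depth $k$ is rewritten, by pushing in commutators, as a concatenation of loops at depth $k+1$, and each step of the induction roughly squares the polynomial degree of the Dehn bound. After $n$ such steps this yields an exponent of order $2^n$. Stage 1 contributes an additional cubic overhead, coming from the fact that the substitute third direction does not commute with the horospherical deformations performed in the same factor, and multiplying these contributions gives the stated exponent $3 \cdot 2^n$.

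The main obstacle will be Stage 1. In BEW the three noncocompact factors genuinely commute, so local fillings in different factors concatenate for free; here the substitute direction sits inside the same $\mathbf{G}(K_v)$ as the horospherical deformations it is meant to coordinate, and commutator corrections appear when one tries to glue. Controlling these corrections, and verifying that they remain in a region where Cornulier--Tessera gives only a polynomial blow-up, is the technical heart of the argument. Once this is in place, the exponent bookkeeping along the lower central series of $\mathbf{U}$ in Stage 2 is largely routine and yields the theorem.
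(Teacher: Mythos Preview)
Your proposal inverts the Bestvina--Eskin--Wortman scheme and misplaces the use of Cornulier--Tessera, and as written it does not go through. In the paper (as in BEW) one does \emph{not} push a loop into a neighborhood of the minimal parabolic $\mathbf{P}(\mathcal{O}_S)$; one fills the loop by a coarse disk in $G$ (quadratic, since $G$ is CAT(0)) and then pushes that \emph{disk} \emph{out} of parabolic regions, step by step from the minimal parabolics ($|I|=0$) up through the maximal ones ($|I|=|\Delta|-1$), until the disk lies in a bounded neighborhood of $\mathbf{G}(\mathcal{O}_S)$. The exponent $2^n$ arises because each of the first $n-1$ steps (the nonmaximal parabolic levels) at worst \emph{squares} the area of the disk via Proposition~\ref{parabolicprop}; the factor $3$ comes from the final, maximal-parabolic step, which is cubic. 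Neither exponent has anything to do with the lower central series of the unipotent radical of the minimal parabolic.

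Correspondingly, Cornulier--Tessera is not applied to the class-$n$ nilpotent radical of $\mathbf{P}$: their Theorem~3.1 is an abelian-by-abelian result and does not cover $\mathbf{P}(\mathcal{O}_S)$. The paper invokes it only at the \emph{maximal} parabolic level $I=\Delta-\alpha$, where (and this is exactly where the $A_n$ hypothesis is used) the unipotent radical $U_{\Phi(I)^+}$ \emph{is} abelian. The substitute for BEW's third factor is not a rank-$2$ flat in a single $\mathbf{G}(K_v)$ but rather an $S$-arithmetic \emph{anisotropic} torus $\widehat{T}_I\leq \mathbf{M}_I(\mathcal{O}_S)$: Lemma~\ref{toruscontraction} produces the missing contracting element there, and Lemma~\ref{fillinsubgroup} then feeds $U_{\Phi(I)^+}\widehat{T}_I\mathbf{A}_I(\mathcal{O}_S)$ into Cornulier--Tessera to get a quadratic Dehn function for that subgroup. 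Your ``Stage~1 third direction'' and the commutator-correction accounting are not needed, and as stated they are too vague to control anything; the actual obstacle when $|S|=2$ is that $\mathbf{A}_I(\mathcal{O}_S)$ alone is too small at the maximal level, and it is the anisotropic torus in the Levi, not extra split rank in a factor, that repairs this.
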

(Note that $n$ is the $K$-rank of $\mathbf{G}$, and therefore $k(\mathbf{G},S) \geq 4$.)

For example, Theorem \ref{mainthm} implies that the following groups have polynomially bounded Dehn functions: 
$\mathbf{SL_3}(\mathbb{Z}[\sqrt{2}])$, or more generally $\mathbf{SL_{n+1}}(\mathcal{O}_K)$ where $n \geq 2$, $\mathcal{O}_K$ is a ring of algebraic integers in a number field $K$, and $\mathcal{O}_K$ is not isomorphic to $\mathbb{Z}$ or $\mathbb{Z}[i]$;
$\mathbf{SL_{n+1}}(\mathbb{Z}[1/k])$ where $n \geq 2$ and $k \in \mathbb{N}-\{1\}$; and
$\mathbf{SL_{n+1}}(\mathbb{F}_p[t, t^{-1}])$ where $n \geq 2$ and $p$ is prime. Indeed, $\mathbf{SL_{n+1}}$ is of type $A_n$ regardless of the relative global field $K$, and $\mathbb{Z}[\sqrt{2}]$, $\mathcal{O}_K$, $\mathbb{Z}[1/k]$, and $\mathbb{F}_p[t, t^{-1}]$ are rings of $S$-integers with $|S|\geq 2$.

\subsection{Dehn Functions and Isoperimetric Inequalities}
If $H$ is a finitely presented group, and $w$ is a word in the generators of $H$ which represents the identity, then there is a finite sequence of relators which reduces $w$ to the trivial word. Let $\delta_H(w)$ be the minimum number of steps to reduce $w$ to the trivial word. The $\emph{Dehn function}$ of $H$ is defined as $$\delta_H(n) = \max_{length(w) \leq n} \delta_H(w)$$
While the Dehn function depends on the presentation of $H$, the growth class of the Dehn function is a quasi-isometry invariant of $H$.

The Dehn function of a simply connected $CW$-complex $X$ is defined analogously. For any loop $\gamma \subset X$, let $\delta_X(\gamma)$ be the minimal area of a disk in $X$ that fills $\gamma$. The Dehn function of $X$ is then
$$\delta_X(n) = \max_{length(\gamma) \leq n} \delta_X(\gamma)$$
If $X$ is quasi-isometric to $H$ (for example, if $X$ has a free, cellular, properly discontinuous, cocompact $H$-action), then the growth class of $\delta_X(n)$ is the same as that of $\delta_H(n)$.

\subsection{Coarse Manifolds}
An \emph{$r$-coarse manifold} in a metric space $X$ is the image of a map from the vertices of a triangulated manifold $M$ into $X$, with the property that any pair of adjacent vertices in $M$ are mapped to within distance $r$ of each other. We will abuse notation slightly and refer to the image of the map as an $r$-coarse manifold as well. If $\Sigma$ is a coarse manifold, then $\partial \Sigma$ is the restriction of the map to $\partial M$. If $M$ is an $n$-manifold, we will say $\Sigma$ is a coarse $n$-manifold, and we define the length or area of $\Sigma$ to be the number of vertices in $\Sigma$. We say two coarse $n$-manifolds, $\Sigma$ and $\Sigma'$, have the same topological type if the underlying manifolds $M$ and $M'$ have the same topological type.
\subsection{Bounds}
We will write $a = O(C)$ to mean that there is some constant $k$, which depends only on $G$ and $\mathbf{G}(\mathcal{O}_S)$, such that $a \leq kC$.

\subsection{Acknowledgements}
The author would like to thank her Ph.D. advisor, Kevin Wortman, under whose direction this work was carried out, for suggesting this problem and for his support and encouragement.
\section{Preliminaries}
\subsection{Parabolic Subgroups}\label{prelim}
Let $K$, $S$, and $\mathbf{G}$ be as above. There is a minimal $K$-parabolic subgroup $\mathbf{P} \leqslant \mathbf{G}$, and $\mathbf{P}$ contains a maximal $K$-split torus which we will call $\mathbf{A}$. Let $\Phi$ be the root system for $(\mathbf{G}, \mathbf{A})$, and observe that $\mathbf{P}$ determines a positive subset $\Phi^+ \subset\Phi$. Let $\Delta$ denote the set of simple roots in $\Phi^+$. (Note that $|\Delta| = rank_K(\mathbf{G})=n$.) For any $I\subseteq \Delta$, $[I]$ will denote the linear combinations generated by $I$ . Let $\Phi(I)^+ = \Phi^+ - [I]$ and $[I]^+=[I] \cap \Phi^+$. 
If $\alpha \in \Phi$, let $\mathbf{U}_{(\alpha)}$ be the corresponding root group. For any $\Psi \subseteq \Phi^+$ which is closed under addition, let $$\mathbf{U}_\Psi = \prod_{\alpha \in \Psi} \mathbf{U}_{(\alpha)}$$
Note that $$\prod_{ v\in S} \mathbf{U}_{\Psi}(K_{v})$$ can be topologically identified with a product of vector spaces and therefore can be endowed with a norm, $||\cdot||$. 

Let $\mathbf{A}_I$ be the connected component of the identity in $(\cap_{\alpha \in I} \ker(\alpha))$. The centralizer of $\mathbf{A}_I$ in $\mathbf{G}$, $\mathbf{Z}_\mathbf{G}(\mathbf{A}_I)$, can be written as $\mathbf{Z}_\mathbf{G}(\mathbf{A}_I)= ~\mathbf{M}_I\mathbf{A}_I$, where $\mathbf{M}_I$ is  a reductive $K$-group with $K$-anisotropic center. Notice that $\mathbf{M}_I\mathbf{A}_I$ normalizes $\mathbf{U}_{\Phi(I)^+}$, and $\mathbf{M}_I$ commutes with $\mathbf{A}_I$.
We define the standard parabolic subgroup $\mathbf{P}_I$ of $\mathbf{G}$ to be $$\mathbf{P}_I = \mathbf{U}_{\Phi(I)^+} \mathbf{M}_I \mathbf{A}_I$$ Note that $\mathbf{P}_{\emptyset} = \mathbf{P}$ and that when $\alpha \in \Delta$,  $\mathbf{P}_{\Delta-\alpha}$ is a maximal proper $K$-parabolic subgroup of $\mathbf{G}$.

We will use unbolding to denote taking the product over $S$ of the local points of a $K$-group, as in $$G = \prod_{v \in S} \mathbf{G}(K_v)$$ 

\subsection{Parabolic regions and reduction theory}
The following theorem was proved in different cases by Borel, Behr, and Harder (cf. \cite{Borel1991} Proposition 15.6, \cite{Behr1969} Satz 5 and Satz 8, and \cite{Harder1969} Korrolar 2.2.7). A summary of the individual results and how they imply the theorem is given in \cite{BestvinaEskinWortman2013}.
\begin{theorem}[Borel, Behr, Harder]\label{cosetreps}
There is a finite set $F\subseteq \mathbf{G}(K)$ of coset representatives for $\mathbf{G}(\mathcal{O}_S)\backslash \mathbf{G}(K) / \mathbf{P}(K)$.
\end{theorem}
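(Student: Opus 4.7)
The plan is to translate the claim into a finiteness statement about orbits on a flag variety and then appeal to reduction theory. Since $\mathbf{P}$ is self-normalizing in $\mathbf{G}$, the map $g\mathbf{P}(K) \mapsto g\mathbf{P}g^{-1}$ identifies $\mathbf{G}(K)/\mathbf{P}(K)$ with the set of minimal $K$-parabolic subgroups of $\mathbf{G}$, and any two such parabolics are $\mathbf{G}(K)$-conjugate, so they form a single $\mathbf{G}(K)$-orbit (equivalently, the $K$-rational points of the flag variety $\mathbf{G}/\mathbf{P}$). The theorem thus asserts that $\mathbf{G}(\mathcal{O}_S)$ has finitely many orbits on $(\mathbf{G}/\mathbf{P})(K)$ by conjugation.

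To establish this I would invoke reduction theory. The flag variety $\mathbf{G}/\mathbf{P}$ is a projective $K$-variety, and one can embed it into some $\mathbb{P}^N$ via a highest-weight representation of $\mathbf{G}$, so $(\mathbf{G}/\mathbf{P})(K) \hookrightarrow \mathbb{P}^N(K)$ inherits a height function. The content of reduction theory is that every $\mathbf{G}(\mathcal{O}_S)$-orbit on $(\mathbf{G}/\mathbf{P})(K)$ contains a representative of bounded height, after which Hermite-style finiteness (or its function-field analogue) gives finitely many orbits. Equivalently, a direct reduction-theoretic approach would construct a Siegel fundamental set $\Omega \subseteq G$ built from the Langlands decomposition $\mathbf{P} = \mathbf{M}\mathbf{A}\mathbf{U}$ of the fixed minimal parabolic, together with a finite set $F \subseteq \mathbf{G}(K)$ satisfying $G = \mathbf{G}(\mathcal{O}_S) \cdot F \cdot \Omega$; an Iwasawa-type factorization $G = K_0 \cdot P$ at each place (with $K_0$ maximal compact or a stabilizer of a special vertex in the Bruhat-Tits building) lets one rearrange and restrict to $K$-rational points to produce $\mathbf{G}(K) = \mathbf{G}(\mathcal{O}_S) \cdot F' \cdot \mathbf{P}(K)$ for a possibly enlarged finite $F' \subseteq \mathbf{G}(K)$. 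The finiteness of $F'$ is essentially the finiteness of the class number of $\mathbf{G}$ — for $\mathbf{G} = \mathbf{SL}_{n+1}$ this reduces to finiteness of products of ideal class numbers of $\mathcal{O}_S$.

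The main obstacle is executing reduction theory uniformly across number fields and function fields, and across split and non-split $\mathbf{G}$: at archimedean places one uses the classical Iwasawa decomposition, at non-archimedean places one replaces it by the decomposition relative to a special vertex of the Bruhat-Tits building, and for non-split $\mathbf{G}$ one must carefully track the interaction between $K$-roots and $K_v$-roots in the Siegel set construction. The technical work involved is substantial but foundational, so rather than rederive it I would follow the paper's lead and attribute the result to Borel, Behr, and Harder, using the summary in Bestvina--Eskin--Wortman as the concrete reference.
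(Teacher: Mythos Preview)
The paper does not give a proof of this theorem at all: it is stated as a cited result, with references to Borel, Behr, and Harder for the individual cases and to Bestvina--Eskin--Wortman for a summary of how they combine. Your proposal ultimately lands in the same place---deferring to those references---and the sketch you add (identifying the double coset space with $\mathbf{G}(\mathcal{O}_S)$-orbits on minimal $K$-parabolics and invoking reduction theory/finiteness of class number) is a reasonable outline of what those references contain, so there is nothing to correct.
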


Any proper $K$-parabolic subgroup $\mathbf{Q}$ of $\mathbf{G}$ is conjugate to $\mathbf{P}_I$ for some $I \subsetneq \Delta$. Let $$\Lambda_\mathbf{Q} = \{ \gamma f \in \mathbf{G}(\mathcal{O}_S)F | (\gamma f)^{-1} \mathbf{P}_I (\gamma f) = \mathbf{Q} \text{ for some } I \subsetneq \Delta\}$$
By Theorem \ref{cosetreps}, $\Lambda_{\mathbf{Q}}$ is nonempty. 
For $a\in A$ and $\alpha \in \Phi$, let $$|\alpha(a)| = \prod_{v \in S}|\alpha(a)|_v$$ where $|\cdot|_{v}$ is the norm on $K_{v}$. 
For $t>0$ and $I \subset \Delta$, let $$A_I^+(t) = \{ a\in A_I\mid |\alpha(a)|\geq t\text{ if } \alpha \in \Delta - I\}$$ and $A_I^+ = A_I^+(1)$. 
We define the\emph{ parabolic region} corresponding to $\mathbf{Q}$ to be $$R_{\mathbf{Q}}(t) = \Lambda_\mathbf{Q} U_{\Phi(I)^+} \mathbf{M}_I(\mathcal{O}_S) A_I^+(t)$$
The boundary of $A_I^+(t)$ is $$ \partial A_I^+(t) = \{ a \in A_I^+ \mid \text{ there exists }\alpha \in \Delta - I \text{ with } |\alpha(a)|\leq |\alpha(b)| \text{ for all } b \in A_I^+\}$$ and the boundary of the parabolic region $R_{\mathbf{Q}}(t)$ is $$\partial R_\mathbf{Q}(t) = \Lambda_\mathbf{Q} U_{\Phi(I)^+} \mathbf{M}_I(\mathcal{O}_S) \partial A_I^+(t)$$

For $0\leq m < |\Delta|$, let $\mathcal{P}(m)$ be the set of $K$-parabolic subgroups of $\mathbf{G}$ that are conjugate via $\mathbf{G}(K)$ to some $\mathbf{P}_I$ with $|I| = m$. The necessary reduction theory is proved in \cite{BestvinaEskinWortman2013}:
\begin{theorem}[Bestvina-Eskin-Wortman, 2013]\label{reduction}
There exists a bounded set $B_0 \subseteq G$, and given a bounded set $B_m \subseteq G$ and any $N_m \geq 0$ for $0 \leq m <|\Delta|$, there exists $t_m>1$ and a bounded set $B_{m+1}\subseteq G$ such that 
\begin{enumerate}[(i)]
\item $G = \cup_{\mathbf{Q} \in \mathcal{P}(0)} R_{\mathbf{Q}} B_0$;
\item if $\mathbf{Q}, \mathbf{Q}' \in \mathcal{P}(m)$ and $\mathbf{Q} \neq \mathbf{Q}'$, then the distance between $R_\mathbf{Q}(t_m)B_n$ and $R_{\mathbf{Q}'}(t_m)B_n$ is at least $N_m$;
\item \label{integers}$\mathbf{G}(\mathcal{O}_S) \cap R_\mathbf{Q}(t_m)B_m = \emptyset$ for all $m$;
\item\label{subtract} if $m \leq |\Delta| - 2$ then $(\cup_{\mathbf{Q}\in \mathcal{P}(m)} R_{\mathbf{Q}} B_{m}) - (\cup_{\mathbf{Q} \in \mathcal{P}(m)} R_\mathbf{Q}(2t_m)B_m)$ is contained in $\cup_{\mathbf{Q}\in\mathcal{P}(m+1)} R_\mathbf{Q} B_{m+1}$;
\item\label{final} $ (\cup_{\mathbf{Q}\in \mathcal{P}(|\Delta| - 1)} R_{\mathbf{Q}} B_{|\Delta|-1}) - (\cup_{\mathbf{Q} \in \mathcal{P}(|\Delta|-1)} R_{\mathbf{Q}}(2t_{|\Delta-1})B_{|\Delta|-1})$ 
is contained in $\mathbf{G}(\mathcal{O}_S)B_{|\Delta|}$; and
\item \label{quasiisometry} if $\mathbf{Q} \in \mathcal{P}(m)$, then there is an $(L, C)$ quasi-isometry $R_{\mathbf{Q}}(t_m) B_m \to U_{\Phi(I)^+} \mathbf{M}_I(\mathcal{O}_S) A_I^+$ for some $I \subsetneq \Delta$ with $|I| = m$. The quasi-isometry restricts to an $(L, C)$ quasi-isometry $\partial R_{\mathbf{Q}}(t_m)B_m \to U_{\Phi(I)^+}\mathbf{M}_I(\mathcal{O}_S)\partial A_I^+$ where $L \geq 1$ and $C \geq 0$ are independent of $\mathbf{Q}$. 
\end{enumerate}
\end{theorem}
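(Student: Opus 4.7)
The plan is to build the data $(t_m,B_m)$ by induction on $m$, starting from classical reduction theory for the minimal $K$-parabolic $\mathbf{P}$ and stripping off one layer at a time as the ambient point drifts deeper into the cusp. Concretely, fix Siegel-type fundamental sets: by the work of Borel--Behr--Harder (in the form used for Theorem \ref{cosetreps}), there is a compact $\Omega\subseteq G$ such that $G=\mathbf{G}(\mathcal{O}_S)\, \Omega\, A^+ F$, which, after absorbing $\Omega$ and $F$ into the definition, yields a bounded $B_0$ giving (i). This reduces everything to understanding how the Langlands/Iwasawa decomposition interacts with the filtration $\mathcal{P}(0)\supset\mathcal{P}(1)\supset\cdots$, where $\mathcal{P}(m)$ consists of parabolics of larger and larger Levi rank.

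For (ii) and (iii), I would exploit that for $a\in A_I^+(t)$ and $\alpha\in\Delta-I$ the character $|\alpha(a)|$ is bounded below by $t$, so $a$ moves a given bounded set arbitrarily far via conjugation on $U_{\Phi(I)^+}$. Given any fixed $N_m$, choosing $t_m$ large enough forces any two regions $R_{\mathbf{Q}}(t_m)B_m$, $R_{\mathbf{Q}'}(t_m)B_m$ with $\mathbf{Q}\ne\mathbf{Q}'$ in $\mathcal{P}(m)$ to separate: if they came within $N_m$, a bounded word would conjugate $\mathbf{Q}$ to $\mathbf{Q}'$, contradicting $\mathbf{G}(K)$-distinctness via the discreteness of $\mathbf{G}(\mathcal{O}_S)$ acting on $\mathcal{P}(m)$. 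The same ``escape to infinity'' argument gives (iii), since a lattice element trapped arbitrarily deep would contradict discreteness of $\mathbf{G}(\mathcal{O}_S)$ in $G$.

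For the covering statements (iv) and (v), the key observation is the inductive Langlands decomposition inside the Levi: a point that lies in $R_{\mathbf{Q}}B_m$ but not in $R_{\mathbf{Q}}(2t_m)B_m$ has its $A_I$-component bounded in at least one direction $\alpha\in\Delta-I$, so that bounded coordinate can be absorbed into a compact piece and the point re-expressed relative to the larger set $I\cup\{\alpha\}$, landing in a parabolic region of level $m+1$ after enlarging the bounded set to some $B_{m+1}$. Iterating until $m=|\Delta|-1$ and peeling off the final split direction returns us to the orbit of the lattice, giving (v). At every stage the new $B_{m+1}$ is bounded because it is built from $B_m$, the fixed compact $\Omega$, and a compact subset of the split torus determined by $t_m$ and $N_m$.

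Finally, (vi) follows by comparing the Langlands-type coordinates $(u,m,a)\in U_{\Phi(I)^+}\mathbf{M}_I(\mathcal{O}_S)A_I^+$ with the structure of $R_{\mathbf{Q}}(t_m)B_m$: the map is a bijection modulo a compact and finite fiber (the cosets in $\Lambda_{\mathbf{Q}}$ and the bounded correction $B_m$), and Lubotzky--Mozes--Raghunathan identifies the word and induced metrics on $\mathbf{M}_I(\mathcal{O}_S)$. The main obstacle I expect is uniformity in (ii) and (vi) of the constants $(L,C)$ and the separation bounds across $\mathbf{Q}\in\mathcal{P}(m)$: a priori each $\mathbf{Q}$ comes with its own conjugation, but the $\mathbf{G}(\mathcal{O}_S)F$-finiteness in the definition of $\Lambda_{\mathbf{Q}}$ and the fact that all conjugations are by elements of a bounded-diameter set in $G/\mathbf{G}(\mathcal{O}_S)$ allows one to extract uniform constants. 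The entire argument is precisely the reduction-theoretic content of \cite{BestvinaEskinWortman2013}.
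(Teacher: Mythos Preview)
The paper does not give its own proof of this theorem: it is quoted verbatim as a result of Bestvina--Eskin--Wortman and attributed to \cite{BestvinaEskinWortman2013} with the sentence ``The necessary reduction theory is proved in \cite{BestvinaEskinWortman2013}.'' There is therefore nothing in the present paper to compare your sketch against; your final line already acknowledges that the argument you outline is exactly the content of that reference, and your sketch is a reasonable high-level summary of how that reduction theory goes.
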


%Any $r$-coarse loop in $\mathbf{G}(\mathcal{O}_S)$ of length $L$ has an $r$-coarse filling disk in $G$ of area $O(L^2)$. This filling disk may intersect the interiors of at most finitely many parabolic subgroups. If we can replace each of these pieces with pieces that remain in the boundary of the parabolic subgroup, and do so without adding too much area, then we will have obtained a filling disk which stays within bounded distance of $\mathbf{G}(\mathcal{O}_S)$. 
%Since every parabolic subgroup $P \in \mathcal{P}$ is conjugate to $P_I$ for some $I \subset \Delta$, proving Proposition \ref{mainprop} reduces to proving the following:

\subsection{Filling coarse manifolds in the boundaries of parabolic regions}
For $I\subsetneq \Delta,$ we let $R_I = U_{\Phi(I)^+}\mathbf{M}_I(\mathcal{O}_S) A_I^+$.
\begin{proposition}\label{parabolicprop}
Suppose $I\subsetneq \Delta$ is a set of simple roots, and let $R_I$ denote the corresponding parabolic region of $\mathbf{G}$. Given $r>0$, there is some $r' \in \mathbb{R}^{>0}$ such that if $\Sigma \subset R_I$ is an $r$-coarse 2-manifold of area $L$, then there is an $r'$-coarse 2-manifold $\Sigma' \subset \partial R_I$ such that $\partial \Sigma = \partial \Sigma'$. Furthermore, if $|I|\leq |\Delta| - 2$, then $area(\Sigma') = O(L^2)$ and if $|I| = |\Delta| - 1$ then $area(\Sigma') = O(L^3)$.
\end{proposition}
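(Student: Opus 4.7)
The plan is to build $\Sigma'$ by flowing each interior vertex of $\Sigma$ along the $A_I$-direction until it meets $\partial A_I^+$, and then to add enough subdivision vertices so that the result is a genuine $r'$-coarse $2$-manifold. Writing $v = uma$ with $u\in U_{\Phi(I)^+}$, $m\in\mathbf{M}_I(\mathcal{O}_S)$, $a\in A_I^+$, I would choose an element $c_v\in A_I$ of norm equal to the $A_I$-distance from $a$ to $\partial A_I^+$, so that $v\cdot c_v = um(ac_v)\in\partial R_I$ (using that $c_v$ commutes with $m$). Since $\partial\Sigma'\subset\partial R_I$ is required, $\partial\Sigma$ already lies in $\partial R_I$ and its vertices may be kept fixed. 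The intrinsic diameter bound for $\Sigma$ forces $\|c_v\|_{A_I} = O(L)$.

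The naive assignment $v\mapsto vc_v$ need not be coarsely Lipschitz: for adjacent $v_1,v_2$ the difference $c_{v_1}^{-1}c_{v_2}$ has $A_I$-norm $O(r)$, but conjugation by $c_{v_i}$ dilates $U_{\Phi(I)^+}$ by factors of order $|\alpha(c_{v_i})|^{-1}$, which can be as large as $e^{O(L)}$. To repair this I would break the flow into $O(L)$ unit-length stages, moving every vertex by a bounded $A_I$-step at each stage, and join the successive flowed images of each original edge by a short path inside $\partial R_I$. With the flow thus tamed, each of the $O(L)$ edges of $\Sigma$ contributes $O(L)$ new vertices, giving total area $O(L^2)$.

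What distinguishes the two cases is how those connecting paths inside $\partial R_I$ are actually constructed. When $|I|\le|\Delta|-2$, the wall $\partial A_I^+$ has positive dimension, so the necessary $A_I$-adjustment between flowed images can be realized by a path along a face of the Weyl chamber, contributing only $O(L)$ cost per edge and giving the $O(L^2)$ bound. When $|I|=|\Delta|-1$, however, $A_I$ is one-dimensional and $\partial A_I^+$ is essentially a single point, so no in-wall $A_I$-motion is available, and each detour must instead be carried out inside $U_{\Phi(I)^+}\cdot\mathbf{M}_I(\mathcal{O}_S)$. This is precisely where the Lipschitz equivalence between the word and Lie-group metrics on $\mathbf{M}_I(\mathcal{O}_S)$ \cite{LubotzkyMozesRaghunathan2000} enters, and it costs an extra factor of $L$, so the total area becomes $O(L^3)$.

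The main obstacle will be controlling the exponential stretch of the flow in the $U_{\Phi(I)^+}$-direction. Both the staged subdivision and the construction of the detours in $\partial R_I$ hinge on managing this distortion, and it is the lack of a second $A_I$-direction in the maximal-parabolic case that forces the gap between $O(L^2)$ and $O(L^3)$.
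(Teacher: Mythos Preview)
Your staged flow does not actually tame the exponential stretch you identify. A unit step $c\in A_I$ that moves a point toward $\partial A_I^+$ will, for at least one $\beta\in\Phi(I)^+$, have conjugation by $c^{-1}$ expand $\mathbf U_{(\beta)}$ by a fixed factor $\lambda>1$; after $O(L)$ stages the $U_{\Phi(I)^+}$-separation between the images of two adjacent vertices is $\lambda^{O(L)}$, not $O(L)$. Your proposed ``short paths inside $\partial R_I$'' at intermediate stages are not available either, since the intermediate images do not lie in $\partial R_I$, and once the endpoints are finally in $\partial R_I$ the required connecting path must absorb an $e^{O(L)}$ unipotent discrepancy, not merely an $A_I$-adjustment along a chamber face. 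The paper never flows toward the boundary while carrying the $U$-coordinate. It uses the distance-nonincreasing projection $\pi_I$, which \emph{discards} the $U_{\Phi(I)^+}$-coordinate, and then (for $|I|\le|\Delta|-2$) builds an annulus between $\partial\Sigma$ and a translate of $\pi_I(\partial\Sigma)$ by right-multiplying by elements $b\in\mathrm{int}(A_{I\cup\alpha}^+)$ chosen so that conjugation by $b^{-1}$ \emph{contracts} $U_{\Phi(I\cup\alpha)^+}$ and fixes the remaining factors; this makes $\Omega\mapsto\Omega b^k$ distance-nonincreasing and yields the $O(L^2)$ bound after a wall-splitting argument (Lemmas~\ref{division}--\ref{intersectionpath}).

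The maximal-parabolic case is where the type-$A_n$ hypothesis is actually used, and your sketch misses the mechanism. Lubotzky--Mozes--Raghunathan plays no role here; what is needed is a \emph{quadratic Dehn function} for the group $U_{\Phi(I)^+}\widehat T_I\mathbf A_I(\mathcal O_S)$, obtained from the Cornulier--Tessera criterion. Verifying that criterion when $|S|=2$ requires, for every pair of local root groups $\mathbf U_{(\beta_1)}(K_v)$ and $\mathbf U_{(\beta_2)}(K_{v'})$ with $v\ne v'$, a single element simultaneously contracting both, and $\mathbf A_I(\mathcal O_S)$ alone cannot supply this. The paper produces it from a $K$-anisotropic torus $\widehat T_I\subset\mathbf M_I(\mathcal O_S)$ whose conjugate lies in $\mathbf A$ (Lemma~\ref{toruscontraction}), via an explicit root computation that relies on the $A_n$ root-string structure. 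Without this ingredient there is no reason the ``detours inside $U_{\Phi(I)^+}\cdot\mathbf M_I(\mathcal O_S)$'' in your last paragraph cost only an extra factor of $L$; indeed, the loops that must be filled there have length $O(L)$ and live in a group whose Dehn function is not a priori polynomial.
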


Proposition \ref{parabolicprop} is proved in Sections \ref{secnonmax} (for nonmaximal parabolics) and \ref{secmax} (for maximal parabolics). 

That Proposition \ref{parabolicprop} implies Theorem \ref{mainthm} is essentially proved in Bestvina-Eskin-Wortman (see \cite{BestvinaEskinWortman2013} Sections 6 and 7). We restate it here in the specific case we require, and add explicit bounds on filling areas.
\begin{proof} [Proof of Theorem \ref{mainthm}]
Let $X$ be a simply connected $CW$-complex on with a free, cellular, properly discontinuous and cocompact $\mathbf{G}(\mathcal{O}_S)$-action. Let $x \in X$ be a basepoint, and define the orbit map
$$\phi:\mathbf{G}(\mathcal{O}_S) \to \mathbf{G}(\mathcal{O}_S)\cdot x$$
Note that $\phi$ is a bijective quasi-isometry between $\mathbf{G}(\mathcal{O}_S)$ with the left invariant metric $d_G$ and the orbit $\mathbf{G}(\mathcal{O}_S)\cdot x$ with the path metric from $X$.

Let $\ell\subset X$ be a cellular loop. The $\mathbf{G}(\mathcal{O}_S)$-action on $X$ is cocompact, so every point in $\ell$ is a uniformly bounded distance from $\mathbf{G}(\mathcal{O}_S)$. Therefore, there is a constant $r_0>0$ such that after a uniformly bounded perturbation, $\ell \cap \mathbf{G}(\mathcal{O}_S)$ is an $r_0$-coarse loop and the Hausdorff distance between $\ell$ and $\ell \cap \mathbf{G}(\mathcal{O}_S)$ is bounded. Let $L$ be the length of $\ell\cap\mathbf{G}(\mathcal{O}_S)$.

There is a constant $r_1>0$ which depends only on $r_0$ and the quasi-isometry constants of $\phi$ such that $\phi^{-1}(\ell\cap\mathbf{G}(\mathcal{O}_S))$ is an $r_1$-coarse loop in $\mathbf{G}(\mathcal{O}_S)$. Since $G$ is quasi-isometric to a $CAT(0)$ space (a product of Euclidean buildings and symmetric spaces), there is an $r_1$-coarse disk $D \subset G$ with $\partial D = \phi^{-1}(\ell\cap\mathbf{G}(\mathcal{O}_S)\cdot x)$ and area $O(L^2)$.

Set $D = D_0$ and $N_0 = 2r_1$. Let $B_1$ and $t_0$ be as in Theorem \ref{reduction}. If $\mathbf{Q} \in \mathcal{P}(0)$, let $$D_{0, \mathbf{Q}} = D_0 \cap R_\mathbf{Q}(t_0)B_0$$
Note that $D_{0,\mathbf{Q}}$ and $D_{0,\mathbf{Q'}}$ are disjoint if $\mathbf{Q} \neq \mathbf{Q'}$. For each $\mathbf{Q} \in \mathcal{P}(0)$, we can perturb $D_{0, \mathbf{Q}}$ by at most $r_1$ to ensure that $\partial D_{0, \mathbf{Q}} \subset \partial R_\mathbf{Q}(t_0)B_0$. By Proposition \ref{reduction}(\ref{quasiisometry}), $\partial R_{\mathbf{Q}}(t_0)B_0$ is quasi-isometric to $\partial R_{\emptyset}$. By Proposition \ref{parabolicprop}, there is some $r_2$ depending only on $r_1$ and the quasi-isometry constants and an $r_2$-coarse 2-manifold $D'_{0, \mathbf{Q}} \subset \partial R_\mathbf{Q}(t_0)B_0$ such that the $2$-manifold obtained by replacing each $D_{0, \mathbf{Q}}$ by $D'_{0, \mathbf{Q}}$, 
$$D_1 = \left(D_0 - \bigcup_{\mathbf{Q}\in\mathcal{P}(0)} D_{0, \mathbf{Q}}\right) \bigcup \left(\bigcup_{\mathbf{Q}\in\mathcal{P}(0)} D'_{0, \mathbf{Q}}\right)$$
is an $r_1$-coarse 2-disk, and $area(D_1) = O(area(D_0)^2) = O(L^4)$. 

By Proposition \ref{reduction}(\ref{subtract}), $$D_1 \subset\left( \bigcup_{\mathbf{Q}\in\mathcal{P}(0)} R_\mathbf{Q}B_0\right)  -\left( \bigcup_{\mathbf{Q}\in\mathcal{P}(0)} R_{\mathbf{Q}}(2t_0)B_0 \right)\subset \bigcup_{\mathbf{Q} \in \mathcal{P}(1)} R_{\mathbf{Q}}B_1$$

By Proposition \ref{reduction}(\ref{integers}), $\mathbf{G}(\mathcal{O}_S) \cap R_{\mathbf{Q}}(t_0)B_0 = \emptyset$, and therefore $\partial D_0 = \partial D_1$.

For $1\leq m \leq |\Delta| - 1$ repeat the above process with $m$ in place of 0, to obtain an $r_{m+1}$-coarse disk $D_{m+1}$ with $\partial D_{m+1} = \partial D_0$ and $area(D_{m+1}) = O(L^{k_{m+1}})$, where $k_{m+1}= 2^{m+2}$ if $n \leq |\Delta| - 2$ and $k_{|\Delta|} = 3\cdot 2^{|\Delta|}$. Furthermore, $$D_{m + 1} \subset \bigcup_{\mathbf{Q} \in \mathcal{P}(m)} R_{\mathbf{Q}} B_m - \bigcup_{\mathbf{Q}\in\mathcal{P}(m)} R_\mathbf{Q}(2t_m) B_m$$
which implies that $D_{|\Delta|} \subset \mathbf{G}(\mathcal{O}_S) B_{|\Delta|}$ by Proposition \ref{reduction}(\ref{final}).

Since $\mathbf{G}(\mathcal{O}_S) B_{|\Delta|}$ is finite Hausdorff distance from $\mathbf{G}(\mathcal{O}_S)$, there is some $r' > 0$ such that there is an $r'$-coarse disk $D'\subset \mathbf{G}(\mathcal{O}_S)$ with $\partial D' = \phi^{-1}(\ell\cap\mathbf{G}(\mathcal{O}_S)\cdot x)$ and $area(D') = O(L^k)$, where $k = 3\cdot 2^{|\Delta|}$.

There is some $r''>0$ which depends only on $r'$ and the quasi-isometry constants of $\phi$ such that $\phi(D') \subset X$ is an $r''$-coarse disk with boundary $\ell \cap \mathbf{G}(\mathcal{O}_S)\cdot x$. First connect pairs of adjacent vertices in $\phi(D')$ by 1-cells to obtain $D''$, then add 2-cells whose 1-skeleton is in $D''$ to obtain $D'''$. Note that $\partial D''' = \ell$, $D'''$ is a bounded Hausdorff distance to $\phi(D')$, and the number of cells in $D'''$ is $O(area(D')) = O(L^k)$ where $k = 3 \cdot 2^{|\Delta|}$. Recall that $|\Delta| = \rank_K(\mathbf{G})=n$, so the Dehn function of $\mathbf{G}(\mathcal{O}_S)$ is bounded by a polynomial of degree $3 \cdot 2^{n}$.

\end{proof}
\subsection{More preliminaries}

\begin{lemma} \label{hausdist} Given $r>0$ sufficiently large, $I \subseteq \Delta$, and $S' \subsetneq S$, there is some $a \in \mathbf{A}_I(\mathcal{O}_S)$ that strictly contracts all root subgroups of $\prod_{v \in S'} \mathbf{U}_{\Phi(I)^+}(K_v)$, such that $d_G(a, 1) \leq r$.
\end{lemma}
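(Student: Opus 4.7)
The plan is to combine an elementary cone-in-lattice argument with Dirichlet's $S$-unit theorem applied to the split torus $\mathbf{A}_I$. The key observation is that the image of $\mathbf{A}_I(\mathcal{O}_S)$ under a logarithmic-value map is a full-rank lattice in the product-formula subspace of $\mathbb{R}^{(\Delta\setminus I)\times S}$, and that the region defining strict contraction at places in $S'$ cuts out an open cone in that subspace which is non-empty precisely because $S'\subsetneq S$.

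First I would reduce the contraction condition to the simple roots in $\Delta\setminus I$. Any $\alpha\in\Phi(I)^+=\Phi^+-[I]$ is a non-negative integer combination of simple roots with at least one $\beta\in\Delta\setminus I$ appearing with positive coefficient. Since every root in $I$ vanishes identically on $\mathbf{A}_I$, the restriction $\alpha|_{\mathbf{A}_I}$ is a non-negative integer combination of $\{\beta|_{\mathbf{A}_I}\}_{\beta\in\Delta\setminus I}$ with at least one positive coefficient. Hence it suffices to produce $a\in\mathbf{A}_I(\mathcal{O}_S)$ with $|\beta(a)|_v<1$ for every $\beta\in\Delta\setminus I$ and $v\in S'$: conjugation of $\mathbf{U}_{(\alpha)}(K_v)\cong K_v$ by $a$ scales by $\alpha(a)$, giving strict contraction once $|\alpha(a)|_v<1$.

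Next I would set up the lattice. Since $\mathbf{A}_I$ is the identity component of an intersection of kernels of $K$-characters of the split torus $\mathbf{A}$, it is itself a $K$-split torus of dimension $n-|I|$, and the restrictions $\{\beta|_{\mathbf{A}_I}:\beta\in\Delta\setminus I\}$ form a $\mathbb{Q}$-basis of $X^*(\mathbf{A}_I)\otimes\mathbb{Q}$. The logarithmic map
\[
L:\mathbf{A}_I(\mathcal{O}_S)\longrightarrow\mathbb{R}^{(\Delta\setminus I)\times S},\qquad a\mapsto\bigl(\log|\beta(a)|_v\bigr)_{\beta,v}
\]
then has image, modulo a finite torsion kernel, a full-rank lattice in
\[
V=\Bigl\{x\in\mathbb{R}^{(\Delta\setminus I)\times S}:\textstyle\sum_{v\in S}x_{\beta,v}=0\text{ for all }\beta\in\Delta\setminus I\Bigr\},
\]
the product-formula subspace. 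This reduces via $\mathbf{A}_I(\mathcal{O}_S)\cong(\mathcal{O}_S^\times)^{n-|I|}$ (up to torsion) to Dirichlet's classical $S$-unit theorem. I expect this step to be where the most care is required, though the content is standard.

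Finally, I would exhibit a point in the open cone $C=\{x\in V:x_{\beta,v}<0\text{ for all }\beta\in\Delta\setminus I,\ v\in S'\}$: fixing any $v_0\in S\setminus S'$, which exists because $S'\subsetneq S$, the vector with $x_{\beta,v}=-1$ for $v\in S'$, $x_{\beta,v_0}=|S'|$, and $x_{\beta,v}=0$ otherwise lies in $V\cap C$. Since $L(\mathbf{A}_I(\mathcal{O}_S))$ is a full-rank lattice in $V$, it meets the open cone $C$, producing the desired $a$. As there are only finitely many pairs $(I,S')$, each yielding a single element of $G$ at finite distance from the identity, the lemma follows by taking $r$ larger than the maximum of these distances.
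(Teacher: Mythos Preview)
Your argument is correct and follows essentially the same route as the paper. The paper's proof simply cites Lemma~12 of Bestvina--Eskin--Wortman, which asserts that the projection of $\mathbf{A}_I(\mathcal{O}_S)$ to $\prod_{v\in S'}\mathbf{A}_I(K_v)$ is a finite Hausdorff distance from the full product, and then reads off an element with $|\alpha(a)|_v<1$ for all $\alpha\in\Delta-I$ and $v\in S'$; you instead unpack that black box by invoking Dirichlet's $S$-unit theorem directly and phrasing the conclusion as ``full-rank lattice meets open cone,'' which is the same content made explicit.
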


\begin{proof}
Lemma 12 in Bestvina-Eskin-Wortman \cite{BestvinaEskinWortman2013} shows that the projection of $\mathbf{A}_I(\mathcal{O}_S)$ to $\prod_{v \in S'} \mathbf{A}_I(K_v)$ is a finite Hausdorff distance from $\prod_{v \in S'} \mathbf{A}_I(K_v)$ . (The proof is independent of $|S|$.) This implies that there is some $a \in \mathbf{A}_I(\mathcal{O}_S)$ such that $|\alpha(a)|_v < 1$ for all $\alpha \in \Delta - I$ and $v \in S'$. Therefore, if $u \in \prod_{v \in S'} \mathbf{U}_{(\beta)}(K_v)$ for some $\beta \in \Phi(I)^+$, then  $||a^{-1}ua|| < ||u||$.
\end{proof}

We will make use of the following lemma in both the maximal and nonmaximal parabolic cases:
\begin{lemma}\label{unipotentpaths}
Let $r>0$ be sufficiently large and $I \subset \Delta$. If $u \in U_{\Phi(I)^+}$, then there is an $r$-coarse path $p_u \subset U_{\Phi(I)^+} \mathbf{A}^+_I(\mathcal{O}_S)$ joining $u$ to $1$ such that $length(p_u) = O(d_G(u, 1))$. 
\end{lemma}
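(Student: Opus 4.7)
My plan is to build the path by peeling off, one at a time, the components of $u$ in each local factor of $G$. Write $u = u_1 u_2 \cdots u_{|S|}$ where $u_i \in \mathbf{U}_{\Phi(I)^+}(K_{v_i})$ is the $v_i$-component under the identification $U_{\Phi(I)^+} = \prod_{v \in S} \mathbf{U}_{\Phi(I)^+}(K_v)$; these factors commute pairwise because they sit in distinct factors of $G = \prod_v \mathbf{G}(K_v)$. For each $i$ from $|S|$ down to $1$, I will construct a sub-path from $u_1 \cdots u_i$ to $u_1 \cdots u_{i-1}$ and then concatenate the $|S|$ resulting sub-paths.

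To construct the $i$-th sub-path, apply Lemma \ref{hausdist} with $S' = \{v_i\}$ to obtain $a_i \in \mathbf{A}_I(\mathcal{O}_S)$ that strictly contracts every root subgroup in $\mathbf{U}_{\Phi(I)^+}(K_{v_i})$, with $d_G(a_i, 1) \leq r$. Let $k_i$ be the smallest positive integer with $\|a_i^{k_i} u_i a_i^{-k_i}\| \leq 1$; since the contraction at $v_i$ is exponential in $k$, one gets $k_i = O(\log\|u_i\|) = O(d_G(u_i, 1)) = O(d_G(u, 1))$. Traverse
\[
u_1 \cdots u_i,\; u_1 \cdots u_i \, a_i^{-1},\; \ldots,\; u_1 \cdots u_i \, a_i^{-k_i},
\]
then take the single-step jump to $u_1 \cdots u_{i-1} a_i^{-k_i}$, justified by the algebraic identity
\[
u_1 \cdots u_i \, a_i^{-k_i} \cdot (a_i^{k_i} u_i a_i^{-k_i})^{-1} = u_1 \cdots u_i \cdot u_i^{-1} a_i^{-k_i} = u_1 \cdots u_{i-1} \, a_i^{-k_i},
\]
and then traverse back along $u_1 \cdots u_{i-1} a_i^{-j}$ for $j = k_i, k_i - 1, \ldots, 0$.

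Each consecutive pair in this sub-path differs by right-multiplication by either $a_i^{\pm 1}$ (of $d_G$-distance at most $r$ from the identity) or, at the middle step, by the unipotent $(a_i^{k_i} u_i a_i^{-k_i})^{-1}$ of norm at most $1$ and hence of $d_G$-distance $O(1)$ from the identity. By left-invariance of $d_G$, each step then has length at most $r$ once $r$ is chosen sufficiently large. Every vertex lies in $U_{\Phi(I)^+} \mathbf{A}_I(\mathcal{O}_S) \subseteq U_{\Phi(I)^+} \mathbf{A}_I^+(\mathcal{O}_S)$, the containment holding because the product formula forces $|\alpha(a)| = 1$ for every $a \in \mathbf{A}_I(\mathcal{O}_S)$ and every $\alpha \in \Delta$. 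Each sub-path has length $2k_i + 1 = O(d_G(u, 1))$, so concatenating the $|S|$ sub-paths yields a path of total length $O(|S| \cdot d_G(u, 1)) = O(d_G(u, 1))$.

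The main point to verify carefully is the middle jump step: that right-multiplying $u_1 \cdots u_i a_i^{-k_i}$ by $(a_i^{k_i} u_i a_i^{-k_i})^{-1}$ cleanly deletes $u_i$ without introducing conjugates of the other factors. This works because $u_i$ is supported only at the single place $v_i$ so that the $a_i^{\pm k_i}$'s telescope against each other, and because $u_i^{-1}$ commutes with each of $u_1, \ldots, u_{i-1}$. The only external ingredients are Lemma \ref{hausdist} (for the contracting element $a_i$) and the standard comparison $d_G(u_i, 1) \asymp \log\|u_i\|$ for unipotent elements of a semisimple Lie group, which controls the growth of $k_i$.
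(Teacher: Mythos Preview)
Your argument is correct and matches the paper's proof essentially step for step: decompose $u$ into its local factors $u_i$, use Lemma~\ref{hausdist} to get an $a_i \in \mathbf{A}_I(\mathcal{O}_S)$ that contracts the $v_i$-factor, travel out along powers of $a_i$ until the conjugate of $u_i$ is small, jump across, and come back. The only cosmetic difference is that you traverse from $u$ down to $1$ while the paper builds from $1$ up to $u$, and your convention for which conjugation direction contracts is opposite to the one in the proof of Lemma~\ref{hausdist} (there $\|a^{-1}ua\| < \|u\|$), but this is harmless since $a_i^{-1}$ lies in $\mathbf{A}_I(\mathcal{O}_S)$ as well.
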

\begin{proof}
Let $L = d_G(u, 1)$, and notice that $||u|| \leq O(e^L)$. Letting $S = \{v_1,\dots, v_k\}$, we can write $u = (u_1,\dots,u_k)$, where $u_i \in \mathbf{U}_{\Phi(I)^+}(K_{v_i})$. 

By the bound on $||u||$, we also have $||u_i||\leq O(e^L)$. By Lemma \ref{hausdist}, we can choose $a_i \in \mathbf{A}_I^+(\mathcal{O}_S)$ such that $a_i$ strictly contracts $\mathbf{U}_{\Phi(I)^+}(K_{v_i})$ and $d_G(a_i, 1) \leq r$.

For some $T_i = O(L)$, $d_G(a_i^{-T_i} u_i a_i^{T_i}, 1) = d_G( u_i a_i^{T_i}, a_i^{T_i}) \leq 1$. Let $p_i = \{ a_i^k \mid 0\leq k \leq T_i\} \cup \{ua_i^k\mid 0\leq k \leq T_i\}$. Note that $p_i$ is an $r$-coarse path from $1$ to $u_i$ of length $O(L)$. Taking $$p_u =p_1 \cup \left( \bigcup_{2 \leq i \leq k} (u_1, \ldots, u_{i - 1}, 1, \ldots, 1)\cdot p_i\right)$$
 gives the desired path from $1$ to $u$.
\end{proof}

\section{Nonmaximal Parabolic Subgroups}\label{secnonmax}
In this section, we will prove Proposition \ref{parabolicprop} in the case where $|I|~\leq~|\Delta|~-~2$.

First, we will divide $\partial R_I$ into two pieces. Recall that $$\partial R_I = U_{\Phi(I)^+} \mathbf{M}_I(\mathcal{O}_S) \partial A_I^+$$ 
$$\partial A_I^+ = \{ a \in A_I^+ | \text{ there exists } \alpha \in \Delta - I\text{ with } |\alpha(a)| \leq |\alpha(b)| \text{ for all } b \in A_I^+\}$$
For $\alpha \in \Delta - I$, we define $A_{I, \alpha}^+, Z_{I, \alpha}^+, B_{I, \alpha},$ and $\hat{B}_{I,\alpha}$ as follows: $$A_{I, \alpha}^+ =\left \{ a \in A_I^+ \mid|\alpha(a)| \leq |\alpha(b)| \text{ for all } b \in A_I^+\right \}$$
$$Z_{I, \alpha}^+ = \bigcup _{\beta \in \Delta -( I\cup \alpha)} A_{I, \beta}^+$$
$$B_{I, \alpha} = U_{\Phi(I)^+} \mathbf{M}_I(\mathcal{O}_S) A_{I, \alpha}^+$$ 
$${\hat B}_{I, \alpha} = U_{\Phi(I)^+}\mathbf{M}_I(\mathcal{O}_S) Z_{I, \alpha}^+$$
Note that $\partial A_I^+ = \cup_{ \alpha \in \Delta - I} A_{I, \alpha}^+$ and that $\partial R_I = B_{I, \alpha} \cup \hat{B}_{I, \alpha}$. We also observe that $A_{I, \alpha}^+ \neq A_{I\cup\alpha}^+$, since $\mathbf{A}_I(\mathcal{O}_S) \subseteq A_{I, \alpha}^+$ for any $\alpha \in \Delta - I$, but $\mathbf{A}_I(\mathcal{O}_S) \not \subset A_{I\cup\alpha}^+$ in general.

Since $A_I^+$ is quasi-isometric to a Euclidean space, there is a projection to $\partial A_I^+$ which is distance nonincreasing. Note that $\mathbf{M}_I(\mathcal{O}_S)$ commutes with $A_I^+$, so there is a distance nonincreasing map $\mathbf{M}_I(\mathcal{O}_S) A_I^+ \to \mathbf{M}_I(\mathcal{O}_S) \partial A_I^+$. 
Let $\pi_I: R_I \to \partial R_I$ be the composition of the distance nonincreasing maps $U_{\Phi(I)^+} \mathbf{M}_I(\mathcal{O}_S) A_I \to \mathbf{M}_I(\mathcal{O}_S)A_I^+$ and $\mathbf{M}_I(\mathcal{O}_S)A_I^+ \to \mathbf{M}_I(\mathcal{O}_S) \partial A_I^+$.

\begin{lemma} \label{division} Suppose $I \subsetneq \Delta$ is a set of simple roots such that $|I|\leq |\Delta| - 2$ and let $r > 0$ and $\alpha \in \Delta - I$ be given. If $\Sigma \subset R_I$ is an $r$-coarse $2$-manifold with boundary and $\partial \Sigma \subset \partial R_I$, then  $\Sigma=\Sigma_1 \cup \Sigma_2$ for $r$-coarse $2$-manifolds with boundary, $\Sigma_1$ and $\Sigma_2,$ such that 
\begin{enumerate}[(i)]
\item $\pi_I(\partial\Sigma_1) \subset B_{I, \alpha}$ and $\pi_I(\partial\Sigma_2) \subset \hat{B}_{I, \alpha}$, 
\item $\Sigma_1 \cap \partial \Sigma \subset B_{I, \alpha}$ and $\Sigma_2 \cap \partial \Sigma\subset \hat{B}_{I, \alpha}$, and
\item $\Sigma_1 \cap \Sigma_2$ consists of finitely many $r$-coarse paths $p_1, \dots, p_k$, with $\pi_I(p_i) \subset\partial B_{I, \alpha} $ and finitely many $r$-coarse loops $\gamma_1, \dots, \gamma_n$ with $\pi_I(\gamma_l) \subset \partial B_{I,\alpha}$.
\end{enumerate}
\end{lemma}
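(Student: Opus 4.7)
The plan is to view $\Sigma$ as a map $\phi: V(M) \to R_I$ from the vertex set of a triangulated $2$-manifold-with-boundary $M$, and to split $M$ combinatorially into two subsurfaces using a $2$-coloring induced by the cover $\partial R_I = B_{I,\alpha} \cup \hat{B}_{I,\alpha}$. I color $v \in V(M)$ \emph{type 1} if $\pi_I(\phi(v)) \in B_{I,\alpha}$ and \emph{type 2} if $\pi_I(\phi(v)) \in \hat{B}_{I,\alpha}$; vertices whose projection lies in the overlap $B_{I,\alpha} \cap \hat{B}_{I,\alpha}$ (which equals $\partial B_{I,\alpha}$) may take either type. For $v \in \partial M$, where $\phi(v)$ already lies in $\partial R_I$, I take the type to reflect $\phi(v)$ itself rather than its image under $\pi_I$; then property~(ii) will be automatic.

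Next I construct a separating $1$-subcomplex $C \subset M$ in the standard combinatorial way. For each edge $e = vv'$ of $M$ whose endpoints are strictly of different type, insert a new midpoint $w_e$, and inside each $2$-cell with mixed vertex colors connect the new midpoints by disjoint interior arcs, subdividing so that every refined $2$-cell has all vertices of a single type. I extend $\phi$ to each $w_e$ by choosing $\phi(w_e) \in \partial B_{I,\alpha}$: since $d_G(\phi(v),\phi(v')) \leq r$ and $\pi_I$ is distance nonincreasing, the points $\pi_I(\phi(v)) \in B_{I,\alpha}$ and $\pi_I(\phi(v')) \in \hat{B}_{I,\alpha}$ lie within distance $r$ of each other in $\partial R_I$ and on opposite sides of $\partial B_{I,\alpha}$, so a point of $\partial B_{I,\alpha}$ can be selected within distance $O(r)$ of both $\phi(v)$ and $\phi(v')$, at the cost of enlarging the coarseness constant from $r$ to some $r' = O(r)$. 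Cutting $M$ along $C$ yields $M_1 \cup M_2$ (with type-$j$ vertices and one side of $C$ assigned to $M_j$), and I set $\Sigma_j = \phi|_{V(M_j)}$ using the above extension.

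Property~(ii) holds directly from the boundary coloring rule. Property~(iii) holds because the cut locus $C$ of a $2$-coloring on a surface is a disjoint union of arcs with endpoints on $\partial M$ and interior loops, and every vertex of $C$ was chosen so that $\pi_I(\phi(w_e)) \in \partial B_{I,\alpha}$. Property~(i) then follows, since $\partial \Sigma_j$ decomposes into its intersection with $\partial \Sigma$ (controlled by (ii)) and with $C$ (controlled by (iii)). The main obstacle is the geometric step above: showing that a point of $\partial B_{I,\alpha}$ sits within $O(r)$ of two points of $\partial R_I$ that straddle it. This reduces to the observation that $B_{I,\alpha}$ and $\hat{B}_{I,\alpha}$ are closed subsets of $\partial R_I$ whose relative boundary inside $\partial R_I$ is $\partial B_{I,\alpha}$, together with the piecewise-linear cone structure of $A_I^+$ and the fact that $U_{\Phi(I)^+}$ and $\mathbf{M}_I(\mathcal{O}_S)$ fibre trivially over its face structure; hence any short path in $\partial R_I$ crossing from $B_{I,\alpha}$ to $\hat{B}_{I,\alpha}$ passes close to $\partial B_{I,\alpha}$.
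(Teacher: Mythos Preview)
Your approach is essentially the paper's: both project by $\pi_I$, split according to whether the image lands in $B_{I,\alpha}$ or $\hat B_{I,\alpha}$, and then pull the decomposition back to $\Sigma$. The paper phrases the splitting as ``by transversality, $\pi_I(\Sigma)$ meets $\partial B_{I,\alpha}$ in an $r$-coarse $1$-manifold'' and simply takes $\Sigma_1,\Sigma_2$ to be the preimages in $\Sigma$; in particular it inserts no new vertices and so keeps the coarseness constant equal to $r$. Your observation that for $v\in\partial M$ one has $\phi(v)\in B_{I,\alpha}$ iff $\pi_I(\phi(v))\in B_{I,\alpha}$ is exactly the key point the paper records (``$\pi_I$ only changes the unipotent coordinates of points in $\partial R_I$'').

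There is, however, one slip in your midpoint step. You place $\phi(w_e)\in\partial B_{I,\alpha}$ within $O(r)$ of $\phi(v)$ and $\phi(v')$, and in the last paragraph you speak of ``two points of $\partial R_I$ that straddle it''. But $\phi(v),\phi(v')$ lie in $R_I$, not in $\partial R_I$; the map $\pi_I$ first deletes the $U_{\Phi(I)^+}$-coordinate and then projects $A_I^+$ onto $\partial A_I^+$, so a point of $\partial B_{I,\alpha}\subset\partial R_I$ close to $\pi_I(\phi(v))$ and $\pi_I(\phi(v'))$ can be arbitrarily far from $\phi(v),\phi(v')$ themselves. What conclusion~(iii) actually asks is only that $\pi_I(p_i)\subset\partial B_{I,\alpha}$, not that $p_i\subset\partial B_{I,\alpha}$. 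So you should either (a) drop the midpoint insertion entirely and, as the paper does, let the separating $1$-manifold consist of vertices of $\Sigma$ already present, or (b) choose $\phi(w_e)\in R_I$ by keeping the $U_{\Phi(I)^+}$- and $\mathbf{M}_I(\mathcal{O}_S)$-coordinates of $\phi(v)$ and only moving the $A_I^+$-coordinate onto the wall between $A_{I,\alpha}^+$ and $Z_{I,\alpha}^+$; then $\phi(w_e)$ is $O(r)$ from $\phi(v)$ and $\pi_I(\phi(w_e))\in\partial B_{I,\alpha}$. With that correction your argument goes through and matches the paper's.
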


\begin{proof}
By transversality, $\pi_I(\Sigma)$ intersects $\partial B_{I,\alpha}$ in an $r$-coarse 1-manifold which is made up of finitely many $r$-coarse paths $(\bar{p}_1, \dots, \bar{p}_k)$ with endpoints in $\pi_I(\partial\Sigma)$ and finitely many $r$-coarse loops $(\bar{\gamma}_1,\dots,\bar{\gamma}_n)$ which do not intersect $\pi_I(\partial\Sigma)$. 
Furthermore, $\pi_I(\Sigma)$ intersects $B_{I,\alpha}$ (respectively $\hat{B}_{I,\alpha}$) in a 2-manifold with boundary, $\bar{\Sigma}_1$ (respectively $\bar{\Sigma}_2$), and
\begin{align}\partial\bar{\Sigma}_i &= (\bar{\Sigma}_i \cap \pi_I(\partial\Sigma)) \cup(\bar{p}_1\cup \dots\cup \bar{p}_k) \cup (\bar{\gamma}_1\cup\dots\cup\bar{\gamma}_n)\end{align}
 
For $x \in \partial R_I$, note that $\pi_I(x) \in B_{I,\alpha}$ if and only if $x \in B_{I,\alpha}$ (since $\pi_I$ only changes the unipotent coordinates of points in $\partial R_I$). Let $\Sigma_1$ and $\Sigma_2$ be the respective preimages of $\bar{\Sigma}_1$ and $\bar{\Sigma}_2$ under $\pi_I$ restricted to $\Sigma$. Note that $\bar{p}_i$ and $\bar{\gamma}_i$ lift to $r$-coarse paths and loops $p_i$ and $\gamma_i$ in $\Sigma$.
Conclusion (i) holds because $\bar{\Sigma}_i= \pi_I(\Sigma_i)$, and conclusions (ii) and (iii) hold by (1) and the definition of $p_i$ and $\gamma_l$.
\end{proof}

\begin{lemma}\label{homotopy}
Suppose $I \subsetneq \Delta$ is a set of simple roots such that $|I|\leq |\Delta| - 2$ and let $r > 0$ and $\alpha \in \Delta - I$ be given. 
If $\Omega$ is a closed $r$-coarse $1$-manifold in $B_{I,\alpha}$ or $\hat{B}_{I,\alpha}$ with diameter and distance to $B_{I,\alpha}\cap\hat{B}_{I,\alpha}$ bounded by $L$, then there is an $r'$-coarse $2$-manifold $\mathcal{A} \subset \partial R_I$ such that $\partial \mathcal{A} = \Omega \cup u\pi_I(\Omega)$ for some $u \in U_{\Phi(I)^+}$ and $area(\mathcal{A}) = O(L^2)$.
\end{lemma}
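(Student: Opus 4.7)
The plan is to construct $\mathcal{A}$ as a union of $r'$-coarse ``vertical'' paths $\gamma_x \subset \partial R_I$, one per vertex $x = u_x m_x a_x$ of $\Omega$, each of length $O(L)$ connecting $x$ to $u\cdot\pi_I(x) = u m_x a_x$ for a single $u \in U_{\Phi(I)^+}$. With $\Omega$ having $O(L)$ vertices (by the diameter bound) and each $\gamma_x$ of length $O(L)$, this will yield $\mathrm{area}(\mathcal{A}) = O(L^2)$ and $\partial\mathcal{A} = \Omega \cup u\pi_I(\Omega)$, as required.

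First I will set up $u$ and the fiber paths. Since $|I| \leq |\Delta|-2$, there is some $\beta \in \Delta - (I \cup \{\alpha\})$, so the corner face $A_{I,\alpha}^+ \cap A_{I,\beta}^+$ of $\partial A_I^+$ is nonempty. Using the distance-to-intersection hypothesis, I will fix $y_0 = u_0 m_0 a_0 \in B_{I,\alpha} \cap \hat{B}_{I,\alpha}$ within $L$ of $\Omega$ with $a_0$ on the corner, and set $u := u_0$. For each $x \in \Omega$, the bound $d_G(x, y_0) = O(L)$ together with Lipschitz continuity of the $U_{\Phi(I)^+}$-projection of the $UMA$-decomposition will give $d_G(u^{-1}u_x, 1) = O(L)$. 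Applying Lemma \ref{unipotentpaths} to $u^{-1}u_x$ yields an $r$-coarse path in $U_{\Phi(I)^+}\mathbf{A}_I^+(\mathcal{O}_S)$ from $u^{-1}u_x$ to $1$ of length $O(L)$; right-translating by $u m_x a_x$ then produces $\gamma_x$.

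Next, I need to keep $\gamma_x$ inside $\partial R_I$. Intermediate vertices of $\gamma_x$ have $A_I$-coordinate $a' a_x$, where $a'$ is a power of the contracting element from Lemma \ref{unipotentpaths}, and for $\gamma_x \subset \partial R_I$ this must lie in $\partial A_I^+$. I will arrange this with a two-phase construction: first contract those root subgroups $\mathbf{U}_{(\gamma)}$ with $\gamma \notin [I \cup \{\alpha\}]$ via powers of $a_1 \in A_{I,\alpha}^+(\mathcal{O}_S)$ (whose powers preserve the $\alpha$-wall); then, after sliding $a_x$ along the $\alpha$-wall to the corner $A_{I,\alpha}^+ \cap A_{I,\beta}^+$ at cost $O(L)$ (paid for by the distance-to-corner hypothesis), contract the remaining ``$\alpha$-only'' root subgroups via powers of $a_2 \in A_{I,\beta}^+(\mathcal{O}_S)$ (preserving the $\beta$-wall). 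Using an $O(L)$-step slide with stopping when each $x$ reaches the corner, adjacent $x, x' \in \Omega$ will give $\gamma_x, \gamma_{x'}$ that remain $r'$-close throughout, making $\mathcal{A} = \bigcup_x \gamma_x$ an $r'$-coarse $2$-manifold.

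The hard part will be the incompatibility between ``contracts every root subgroup of $U_{\Phi(I)^+}$'' and ``lies on a single wall of $\partial A_I^+$'': any $a \in A_{I,\alpha}^+$ fixes the root subgroups for roots in $[I \cup \{\alpha\}] \cap \Phi^+$, so no single contracting element suffices. This is what forces the two-phase construction, and it is precisely here that both $|I| \leq |\Delta|-2$ (providing a second wall direction $\beta$) and the distance-to-corner hypothesis (bounding the cost of the preliminary slide) are essential. Verifying that the two phases splice together uniformly over all $x \in \Omega$---rather than just pointwise---will be the most technical part of the proof.
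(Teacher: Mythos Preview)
Your high-level plan---a two-phase contraction separated by a slide to a corner of $\partial A_I^+$, phase one handling roots in $\Phi(I\cup\alpha)^+$ and phase two the remaining roots in $[I\cup\alpha]\cap\Phi(I)^+$---is exactly the paper's strategy. The gap is in your choice of contracting elements. You build each fiber $\gamma_x$ via Lemma~\ref{unipotentpaths}, whose contracting elements $a_i\in\mathbf{A}_I(\mathcal{O}_S)$ are \emph{integral}. By the product formula these satisfy $|\gamma(a_i)|=1$ for every character $\gamma$, so they already lie on \emph{every} wall of $\partial A_I^+$; hence your notations $A_{I,\alpha}^+(\mathcal{O}_S)$ and $A_{I,\beta}^+(\mathcal{O}_S)$ both collapse to $\mathbf{A}_I(\mathcal{O}_S)$, and your stated concern about keeping $a'a_x\in\partial A_I^+$ is actually a non-issue. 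The real obstruction is that each $a_i$ contracts $\mathbf{U}_{\Phi(I)^+}(K_{v_i})$ while \emph{expanding} the factors at the other places of $S$, so right multiplication by $a_i^k$ is not distance-nonincreasing on $\Omega$: for adjacent $x,x'\in\Omega$ the unipotent part of $(x')^{-1}x$ gets expanded, and $\gamma_x,\gamma_{x'}$ can diverge by a factor exponential in $k=O(L)$. You flag the uniform splicing over $x$ as the technical heart, but with these elements it simply fails. (Note also that right translation by $um_xa_x$ is not an isometry for the left-invariant $d_G$, so even the coarseness of an individual $\gamma_x$ is not free.)

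The paper fixes this by working with $\Omega$ as a whole and using \emph{non-integral} elements of $A_I$ that never expand any root subgroup of $U_{\Phi(I)^+}$: first $b\in \mathrm{int}(A_{I\cup\alpha}^+)$, which strictly contracts $U_{\Phi(I\cup\alpha)^+}$ and commutes with $U_{[I\cup\alpha]\cap\Phi(I)^+}$, $\mathbf{M}_I(\mathcal{O}_S)$, and $A_I^+$, so that $\Omega\mapsto\Omega b^k$ is genuinely distance-nonincreasing and $\bigcup_k\Omega b^k$ is an honest coarse annulus; then, after sliding to the corner $\partial B_{I,\alpha}$, an element $c\in\partial A_I^+$ with $|\alpha(c)|_v>1$ and $|\beta(c)|_v=1$ for all $\beta\ne\alpha$, which contracts the remaining $U_{[I\cup\alpha]\cap\Phi(I)^+}$ while sitting on the other walls. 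If you replace your $a_1,a_2$ by such $b,c$, your fiber-path picture becomes equivalent to the paper's construction.
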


\begin{proof}
We will begin with the case where $\Omega \subset B_{I,\alpha}$.
For $x \in \Omega$, we can write $x = u_x m_x a_x$ with $u_x \in U_{\Phi(I)^+}, m_x \in \mathbf{M}_I(\mathcal{O}_S),$ and $a_x \in A_{I, \alpha}^+$. Since the diameter of $\Omega$ is bounded by $L$, $||u_x^{-1}u_y|| \leq O(e^L)$ for any $x, y \in \Omega$. 
Choose $b \in int(A_{I\cup\alpha}^+)$ with $d_G(b, 1) \leq r$. Note that $b$ commutes with $U_{[I\cup\alpha]}$, $\mathbf{M}_I(\mathcal{O}_S)$, and $A_{I}^+$, and that conjugation by $b^{-1}$ strictly contracts $U_{\Phi(I\cup\alpha)^+}$. Also, $U_{\Phi(I)^+} = U_{\Phi(I\cup\alpha)^+} U_{[I\cup\alpha]\cap \Phi(I)^+}$, so conjugation by $b^{-1}$ does not expand any root group in $U_{\Phi(I)^+}$.

Since $d_G(b, 1) \leq r$, left invariance of $d_G$ implies that $d_G(gb, g) \leq r$ for any $g \in G$. Right multiplication by $b^k$ is distance nonincreasing on $\Omega$ when $k \geq 0$, since for any $x, y \in \Omega$, 
\begin{align*}
d_G(xb^k, yb^k) &= d_G(u_xm_xa_x b^k,u_ym_ya_yb^k)\\
&= d_G(u_x b^k m_x a_x, u_y b^k m_y a_y)\\
&= d_G(b^{-k}u_y^{-1}u_xb^k m_x a_x, m_ya_y)\\
&\leq d_G(u_y^{-1}u_xm_xa_x, m_ya_y)\\
&= d_G(u_x m_x a_x, u_y m_y a_y) \\
&= d_G(x, y)
\end{align*}
Therefore, $\cup _{0 \leq k \leq m} \Omega b^k$ is a $2r$-coarse $2$-manifold for any $m \in \mathbb{N}$, which has the topological type of $\Omega\times [0,1]$, boundary $\Omega \cup \Omega b^n$ and whose area is bounded by $Lm$. There is some $T = O(L)$ such that the $U_{\Phi(I\cup\alpha)^+}\text{-coordinates}$ of $\Omega b^T$ are nearly constant. More precisely, there is some fixed $u^* \in U_{\Phi(I\cup\alpha)^+}$ and some $v_x \in U_{[I\cup\alpha]\cap\Phi(I)^+}$ for each $x$ such that $$d_G(u_xm_xa_xb^T, u^*v_xm_xa_xb^T) \leq r$$ for every $x$ in $\Omega$. Let $\Omega_1 = \{u^*v_xm_xa_xb^T\}_{x \in \Omega}$ and let $\mathcal{A}_1 = \Omega_1 \cup ~(\cup_{0 \leq k \leq T} \Omega b^k)$ be the $2r$-coarse $2$-manifold with boundary $\Omega \cup \Omega_1$. Note that $area(\mathcal{A}_1) = O(L^2)$.

Let $\Omega_2 = \{ u^* v_x m_x a_x\}_{x \in \Omega}$. Note that $\Omega_2$ is an $r$-coarse 1-manifold of the same diameter as $\Omega$, since \begin{align*}d_G(u^* v_x m_x a_x, u^* v_y m_y a_y) &= d_G(u^*b^T v_x m_x a_x, u^* b^T v_y m_y a_y) \\
&= d_G( u^* v_x m_x a_x b^T, u^* v_y m_y a_y b^T)  \\
&\leq r\end{align*}
Again, there is a $2r$-coarse 2-manifold formed by $\cup_{0\leq k \leq T} \Omega_1 b^{-k}$, with area $O(L^2)$ and boundary $\Omega_1 \cup \Omega_2$.
After left translation, $(u^*)^{-1}\Omega_2 \subset U_{[I\cup\alpha]\cap\Phi(I)^+} \mathbf{M}_I(\mathcal{O}_S) A_{I,\alpha}^+$. 
Since $b$ commutes with 
$U_{[I\cup\alpha] \cap\Phi(I)^+} \mathbf{M}_I(\mathcal{O}_S) A_{I,\alpha}^+$, after a perturbation by at most $r$, the $2r$-coarse 2-manifold formed by $\cup_{k \in \mathbb{Z}} (u^*)^{-1}\Omega_2b^k$ intersects $\partial B_{I, \alpha}$ in a $2r$-coarse closed $1$-manifold of length $O(L)$. 
Call this $\Omega_3$ and let $\mathcal{A}_3$ be the portion of $\cup_{k \in \mathbb{Z}} (u^*)^{-1}\Omega_2b^k$ bounded by $(u^*)^{-1}\Omega_2$ and $\Omega_3$. 
Since the distance from $\Omega$ to $\partial B_{I,\alpha}$ is bounded by $L$, the area of $\mathcal{A}_3$ is $O(L^2)$. Note that if $\hat{x}=v_xm_xa_x \in ~(u^*)^{-1}\Omega_2$, then $\bar{x}=v_xm_x\bar{a}_x \in \Omega_3$, where $\bar{a}_x \in \partial A^+_{I,\alpha}$. The bound on the diameter of $\Omega_3$ implies that $||v_x^{-1}v_y||\leq e^L$ for all $\bar{x} \in \Omega_3$. 

Choose $c \in \partial A_I^+$ such that $d_G(c, 1) \leq r$, and for every $v \in S$, $|\alpha(c)|_v >1 $ and $|\beta(c)|_v = 1$ for every $\beta \in \Delta - \alpha$. There is some $T'=O(L)$ such that $\Omega_3c^{T'}$ has nearly constant $U_{[I\cup\alpha]\cap\Phi(I)^+}$-coordinates. That is, there is some $v^* \in U_{[I\cup\alpha]\cap\Phi(I)^+}$ such that $d_G(v_xm_x\bar{a}_x c^{T'}, v^* m_x \bar{a}_x c^{T'}) \leq 2r$ for all $\bar{x} \in \Omega_3$. 
Let $\Omega_4 = \{ v^* m_x \bar{a}_x c^{T'}\}_{ x\in\Omega}$, and let $\mathcal{A}_4$ be the $4r$-coarse 2-manifold $\Omega_4 \cup (\cup_{0\leq k \leq T'} \Omega_3c^k)$. The area of $\mathcal{A}_4$ is $O(L^2)$.
Since $c$ commutes with $\mathbf{M}_I(\mathcal{O}_S)$ and $A_{I}^+$, $\Omega_5 = \Omega_4 c^{-T'}$ is a $2r$-coarse $1$-manifold, and there is a $4r$-coarse 2-manifold $\mathcal{A}_5 = \cup_{0\leq k \leq T'} \Omega_4 c^{-k}$ which has boundary $\Omega_4 \cup \Omega_5$, and area $O(L^2)$.

Finally, observe that $\Omega_5 = \{ v^* m_x \bar{a}_x\}_{x \in \Omega}$ has the same $\mathbf{M}_I(\mathcal{O}_S)$-coordinates as $\Omega$, and that $b$ commutes with $\Omega_5$. Therefore, there is a $2r$-coarse 1-manifold $\Omega_6 \subset \cup_{k\in\mathbb{Z}}\Omega_5 b^k $ which has the form $\Omega_6 = \{ v^* m_x a_x\}_{x\in\Omega}$, and there is a $4r$-coarse 2-manifold $\mathcal{A}_6$ bounded by $\Omega_5$ and $\Omega_6$ with area $O(L^2)$.

Taking $$\mathcal{A} = \mathcal{A}_1 \cup \mathcal{A}_2 \cup u^*\mathcal{A}_3 \cup u^* \mathcal{A}_4 \cup u^* \mathcal{A}_5 \cup u^* \mathcal{A}_6$$ and $r' = 4r$ completes the proof.
\end{proof}

\begin{lemma} \label{intersectionpath}
Suppose $I \subsetneq \Delta$ is a set of simple roots such that $|I|\leq |\Delta| - 2$, and let $\alpha \in \Delta - I$ and $r>0$ be given. If $p \subset R_I$ is an $r$-coarse path with endpoints $x, y \in \partial B_{I,\alpha}$ such that $\pi_I(p) \subset \partial B_{I,\alpha}$, then there is an $r$-coarse path $p' \subset \partial B_{I,\alpha}$ joining $x$ to $y$ of length $O(length(p))$, and $\pi_I(p) \cup \pi_I(p')$ bound a disk of area $O(length(p)^2)$ in $\partial R_I$.
\end{lemma}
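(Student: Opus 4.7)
The plan is to imitate the sweeping argument of Lemma \ref{homotopy}, applied to the $r$-coarse $1$-manifold with boundary $p$ rather than a closed loop. Write each vertex $v_i$ of $p$ in the form $v_i = u_i m_i a_i$ with $u_i \in U_{\Phi(I)^+}$, $m_i \in \mathbf{M}_I(\mathcal{O}_S)$, and $a_i \in A_I^+$, and set $L = length(p)$. By hypothesis the $A$-coordinate of each $\pi_I(v_i)$ already lies in $A_{I,\alpha}^+ \cap Z_{I,\alpha}^+$, and the endpoints $x, y \in \partial B_{I,\alpha}$ themselves have their $A$-coordinates in this set, so only the interior of $p$ needs to be pushed into $\partial B_{I,\alpha}$.

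Stage 1 (two sweeps in $A$): Choose $b \in \mathrm{int}(A_{I\cup\alpha}^+)$ with $d_G(b,1) \leq r$ and form $\bigcup_{0 \leq k \leq T_1} p b^k$ with $T_1 = O(L)$. As in Lemma \ref{homotopy}, conjugation by $b^{-1}$ strictly contracts $U_{\Phi(I\cup\alpha)^+}$ and commutes with $\mathbf{M}_I$ and $A_I^+$, so this yields a $2r$-coarse $2$-manifold of area $O(L^2)$ whose far boundary has $U_{\Phi(I\cup\alpha)^+}$-coordinate within $r$ of a fixed $u^* \in U_{\Phi(I\cup\alpha)^+}$. After left-translating by $(u^*)^{-1}$, choose $c \in A_I^+$ with $d_G(c,1) \leq r$, $|\alpha(c)|_v > 1$, and $|\beta(c)|_v = 1$ for every $\beta \in \Delta - \alpha$ (as at the end of the proof of Lemma \ref{homotopy}); sweeping by $c^k$ for $T_2 = O(L)$ iterations drives the $A$-coordinate of every interior vertex onto $\partial A_I^+$. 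Because right-multiplication by $c$ fixes $|\beta(\cdot)|_v$ for $\beta \neq \alpha$ and $\pi_I(p) \subset \partial B_{I,\alpha}$, the landed path $p_2$ has all $A$-coordinates inside $A_{I,\alpha}^+ \cap Z_{I,\alpha}^+$, i.e.\ $p_2 \subset \partial B_{I,\alpha}$, and is joined to (the translate of) $p$ by a $2r$-coarse $2$-manifold of area $O(L^2)$.

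Stage 2 (endpoint correction): The endpoints of $p_2$ differ from $x$ and $y$ only by the left translation by $(u^*)^{-1}$ and the right multiplications by $b^{T_1}$ and $c^{T_2}$, all of which are chosen to stabilize $\partial B_{I,\alpha}$, so the residual discrepancy lies purely in $U_{\Phi(I)^+}$. Apply Lemma \ref{unipotentpaths}, but restrict the contracting elements produced by Lemma \ref{hausdist} to a sub-torus of $\mathbf{A}_{I\cup\alpha}(\mathcal{O}_S)$ that preserves $A_{I,\alpha}^+ \cap Z_{I,\alpha}^+$ pointwise (for instance $\mathbf{A}_{I\cup\alpha}(\mathcal{O}_S) \cap \ker\beta$ for some $\beta \in \Delta - (I\cup\alpha)$). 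This subtorus has positive rank precisely because $|I| \leq |\Delta|-2$, which is where the non-maximal hypothesis enters. The resulting $r$-coarse paths of length $O(L)$ stay inside $\partial B_{I,\alpha}$ and connect the two endpoints of $p_2$ to $x$ and $y$ respectively.

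Concatenation produces $p' \subset \partial B_{I,\alpha}$ from $x$ to $y$ of length $O(L)$. The $\pi_I$-images of the two sweeps from Stage 1, together with the thin $O(L) \times O(1)$ collars contributed by the endpoint corrections, assemble into an $r'$-coarse disk in $\partial R_I$ of area $O(L^2)$ whose boundary is $\pi_I(p) \cup \pi_I(p')$. The main obstacle is verifying that the $c$-sweep lands specifically in $A_{I,\alpha}^+ \cap Z_{I,\alpha}^+$ rather than merely on $\partial A_I^+$; this is forced by the hypothesis $\pi_I(p) \subset \partial B_{I,\alpha}$ together with the choice of $c$ fixing $|\beta(\cdot)|_v$ for every $\beta \in \Delta - \alpha$, so that the witness $\beta$ for membership in $Z_{I,\alpha}^+$ is preserved throughout the sweep.
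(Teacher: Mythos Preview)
Your approach imitates the six-stage sweeping argument of Lemma~\ref{homotopy}, but the paper's own proof of this lemma is far shorter and more direct, and your version has a genuine gap.

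The paper never sweeps $p$ at all. It simply sets $p_1 = u_x\,\pi_I(p)$: since $\pi_I(p)\subset \partial B_{I,\alpha}$ already lies in $\mathbf{M}_I(\mathcal{O}_S)\,\partial A^+_{I,\alpha}$, left-translating by $u_x\in U_{\Phi(I)^+}$ keeps it inside $\partial B_{I,\alpha}$, and by construction $\pi_I(p_1)=\pi_I(p)$. Then $p_2$ is a single application of Lemma~\ref{unipotentpaths} to the element $u'=(m_ya_y)^{-1}(u_x^{-1}u_y)(m_ya_y)\in U_{\Phi(I)^+}$, translated to start at $u_xm_ya_y$ and end at $y$; because the $\mathbf{A}_I^+(\mathcal{O}_S)$-factor has $|\beta(\cdot)|=1$ for every $\beta$ (product formula), $p_2\subset\partial B_{I,\alpha}$ as well. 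The disk is now almost free: since $\pi_I(p_1)=\pi_I(p)$, the loop $\pi_I(p)\cup\pi_I(p')$ collapses to $\pi_I(p_2)$, a loop in $m_y\mathbf{A}_I^+(\mathcal{O}_S)$, which is quasi-Euclidean and hence quadratically fillable.

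In your argument, Stage~2 fails as written. After the $b^{T_1}$-sweep and the $c^{T_2}$-sweep, the endpoint that started at $x=u_xm_xa_x$ sits (up to the $(u^*)^{-1}$ translation) at a point whose $A$-coordinate is $a_xb^{T_1}c^{T_2}$, not $a_x$. Your claim that ``the residual discrepancy lies purely in $U_{\Phi(I)^+}$'' is therefore incorrect: there is an $A$-discrepancy of size $O(L)$ that must still be undone inside $\partial B_{I,\alpha}$. This is exactly what the omitted reverse sweeps $\mathcal{A}_2,\mathcal{A}_5,\mathcal{A}_6$ in Lemma~\ref{homotopy} are for. Without them, the restricted Lemma~\ref{unipotentpaths} you invoke cannot close the gap, since it only corrects unipotent coordinates. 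The fix is either to insert the reverse sweeps (effectively reproving Lemma~\ref{homotopy} for a $1$-manifold with boundary), or---much more simply---to use the paper's $p_1=u_x\pi_I(p)$ trick, which sidesteps the entire sweeping apparatus.
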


\begin{proof}
Let $length(p) = L$. We can write $x = u_xm_xa_x$ and $y=u_ym_ya_y$ for $u_x, u_y \in U_{\Phi(I)^+}; m_x, m_y \in \mathbf{M}_I(\mathcal{O}_S);$ and $a_x, a_y \in \partial A_{I,\alpha}^+$.

Since $\pi_I$ is distance nonincreasing, $\pi_I(p)$ is an $r$-coarse path of length $L$ from $m_xa_x$ to $m_y a_y$. Left multiplication by $u_x$ gives an $r$-coarse path $p_1$, with length $L$, joining $x$ to $u_x m_ya_y$. 

Note that $u'= (m_ya_y)^{-1}(u_x^{-1}u_y)(m_ya_y) \in U_{\Phi(I)^+}$ because $U_{\Phi(I)^+}$ is normalized by $\mathbf{M}_I(\mathcal{O}_S) A_I^+$. Also,
\begin{align*}
d_G(u', 1) &= d_G(u_xm_ya_y, u_ym_ya_y)\\
&\leq d_G(u_xm_ya_y, u_xm_xa_x) + d_G(u_xm_xa_x, u_ym_ya_y)\\
&\leq 2L
\end{align*}
By Lemma \ref{unipotentpaths}, there is an $r$-coarse path in $ U_{\Phi(I)^+}\mathbf{A}_I^+(\mathcal{O}_S)$ from $u'$ to 1, with length $O(L)$. Left multiplication by $u_xm_ya_y$ gives an $r$-coarse path $p_2 \subset m_ya_yU_{\Phi(I)^+}\mathbf{A}_I^+(\mathcal{O}_S)$ of length $O(L)$ joining $u_xm_ya_y$ to $y$. 

Let $p' = p_1 \cup p_2$. Note that $p \cup p'$ is a loop in $R_I$, and that $\pi_I(p_1) = \pi_I(p)$. Therefore $\pi_I(p_2)$ forms a loop in $m_y\mathbf{A}_I^+(\mathcal{O}_S)$. Since $\mathbf{A}_I^+(\mathcal{O}_S)$ is quasi-isometric to a Euclidean space of dimension $(|\Delta-I|)(|S|-1)$, it has a quadratic Dehn function, and therefore $\pi_I(p_2)$ bounds an $r$-coarse disk of area $O(L^2)$ in $m_y\mathbf{A}_I^+(\mathcal{O}_S)\subset \partial R_I$.
\end{proof}

We will now prove Proposition \ref{parabolicprop} in the case when $|I|\leq |\Delta| - 2$.
\begin{proof}[Proof of Proposition \ref{parabolicprop} for nonmaximal parabolics]
We will prove the lemma in two cases: first the case where $\partial \Sigma$ intersects both $B_{I, \alpha}$ and $\hat{B}_{I, \alpha}$ nontrivially for some $\alpha\in\Delta-I$; second the case where $\partial \Sigma \subset B_{I, \alpha}$ for some $\alpha\in \Delta-I$. These two cases are sufficient, because $\partial R_I = \cup_{\alpha \in \Delta - I}B_{I,\alpha}$, so $\partial \Sigma$ must intersect $B_{I, \alpha}$ for at least one $\alpha\in\Delta-I$. 

Suppose $\Sigma$ intersects both $B_{I,\alpha}$ and $\hat{B}_{I,\alpha}$. By Lemma \ref{division}, $\Sigma$ can be written as the union of two $r$-coarse 2-manifolds, $\Sigma_1$ and $\Sigma_2$, such that $\Sigma_1 \cap \partial \Sigma \subset B_{I,\alpha}$ and $\Sigma_2 \cap \partial \Sigma \subset \hat{B}_{I,\alpha}$, and $\Sigma_1 \cap \Sigma_2$ is a collection of $r$-coarse loops and $r$-coarse paths in $R_I$ with endpoints in $\partial \Sigma$. 

Suppose $p_j$ is an $r$-coarse path in $\Sigma_1 \cap \Sigma_2$, with endpoints in $\partial B_{I,\alpha}$. Lemma \ref{division} implies that $\pi_I(p_j) \subset \partial B_{I,\alpha}$, so we can apply Lemma \ref{intersectionpath} to obtain an $r$-coarse path $p'_j$ in $\partial B_{I,\alpha}$ which has the same endpoints as $p_j$ and length $O(length(p_j))$. If $\gamma_l$ is an $r$-coarse loop in $\Sigma_1 \cap \Sigma_2$, choose $x_l \in \gamma_l$ and write $x_l = u_lg_l$ for $u_l \in U_{\Phi(I)^+}$ and $g_l \in \mathbf{M}_I(\mathcal{O}_S)A_I^+$. Let $\gamma'_l = u_l\pi_I(\gamma_l)$ and note that $\gamma'_l \subset \partial B_{I,\alpha}$ and $\pi_I(\gamma'_l) = \pi_I(\gamma_l)$.

Note that $\partial \Sigma_i$ is a closed 1-manifold, and $$\partial \Sigma_i = (\Sigma_i \cap \partial \Sigma) \cup (p_1 \cup \dots \cup p_k)\cup(\gamma_1\cup\dots\cup\gamma_n)$$
Although $\partial \Sigma_i \not \subset \partial R_I$, we can replace $p_j$ by $p'_j$ and $\gamma_l$ by $\gamma'_l$ to obtain a closed $1$-manifold of the same topological type as $\partial \Sigma_i$ which is contained in $\partial R_I$. Let 
$$\Omega_i = (\Sigma_i \cap \partial \Sigma) \cup (p'_1\cup \dots \cup p'_k)\cup(\gamma'_1\cup\dots\cup\gamma'_n)$$
By Lemmas \ref{division} and \ref{intersectionpath}, the total length of $\Omega_i$ is $O(area(\Sigma))$.

Lemma \ref{homotopy} implies the existence of a constant $r'>0$ and $r'$-coarse $2$-manifolds $\mathcal{A}_1$ and $\mathcal{A}_2$ such that $\partial \mathcal{A}_i = \Omega_i \cup u_i \pi_I(\Omega_i)$ for some $u_i \subset U_{\Phi(I)^+}$, and $area(\mathcal{A}_i) = O(area(\Sigma)^2)$. By Lemma \ref{intersectionpath}, there is a family of disks $D_{i,j} \subset \partial R_I$ such that $$\Sigma_i' = \mathcal{A}_i \cup \left(\cup_{j}D_{i,j}\right) \cup u_i \pi_I(\Sigma_i)$$ is an $r'$-coarse 2-manifold of the same topological type as $\Sigma_i$.  Note that $\sum_{j=1}^k length(p_j) \leq L$, which implies that $\sum_{j=1}^k area(D_{i, j}) \leq L^2$ and therefore $area(\Sigma_i') = O(area(\Sigma)^2)$. Taking $\Sigma' = \Sigma'_1 \cup \Sigma'_2$ completes the first case of the proof.

We now assume that $\partial \Sigma \subset B_{I,\alpha}$. Let $\Omega = \partial \Sigma$ and let $L$ be the total length of $\partial \Sigma$. Every point $x \in \partial \Sigma$ can be written as $x = u_xm_xa_x$ for $u_x \in U_{\Phi(I)^+}$, $m_x \in \mathbf{M}_I(\mathcal{O}_S)$, and $a_x \in A_{I,\alpha}^+$. Note that $||u_x^{-1}u_y|| = O(e^L)$ for $x, y \in \partial \Sigma$.
Choose some $b \in int(A_{I\cup\alpha}^+)$ which strictly contracts $U_{\Phi(I\cup\alpha)^+}$.
As in the proof of Lemma \ref{homotopy}, right multiplication by $b^k$ is distance nonincreasing on $\Sigma$ when $k \geq 0$, and there is some $T = O(L)$ such that $\Omega b^T$ has nearly constant $U_{\Phi(I\cup\alpha)^+}$-coordinates. Let $u^* \in U_{\Phi(I\cup\alpha)^+}$ be such that $$d_G(u_xm_xa_xb^T, u^*v_xm_xa_xb^T) \leq r$$ for every $x \in \Omega$. Let $\Omega_1 = \{u^* v_xm_xa_x|x \in \Omega\}$. As in the proof of Lemma \ref{homotopy}, there is a $2r$-coarse 2-manifold $\mathcal{A}$ with boundary $\Omega \cup \Omega_1$ and area $O(L^2)$. 

There is a distance nonincreasing map $f:U_{\Phi(I)^+}\mathbf{M}_I(\mathcal{O}_S)A_I^+ \to U_{[I\cup\alpha]\cap\Phi(I)^+} \mathbf{M}_I(\mathcal{O}_S) A_{I,\alpha}^+$. Taking $r' = 2r$ and $\Sigma' = f(\Sigma) \cup \mathcal{A}$ completes the proof.
\end{proof}

\section{Maximal Parabolic Subgroups}\label{secmax}

In this section, we will prove Proposition \ref{parabolicprop} in the case where $R_I$ is a maximal parabolic subgroup of $G$ (when $|I|=|\Delta|-1$). There is a simple root $\alpha \in \Delta$ such that $I = \Delta - \alpha$.

As in the previous section, there is a distance nonincreasing map $\pi_I: U_{\Phi(I)^+} \mathbf{M}_I(\mathcal{O}_S) A_I \to \mathbf{M}_I(\mathcal{O}_S) \partial A_I$. Note that $\partial A_I = A_{\Delta}$ which is quasi-isometric to $\mathbf{A}(\mathcal{O}_S)$, so $\mathbf{M}_I(\mathcal{O}_S) \partial A_I$ is quasi-isometric to $(\mathbf{M}_I\mathbf{A})(\mathcal{O}_S)$. 

%double-check this proof
\begin{lemma} \label{pathstom} Given $r>0$ sufficiently large, and  $x \in \partial R_{I}$, with $d_G(x, 1)$ bounded by $L$, there is an $r$-coarse path in $\partial R_{I}$ joining $x$ to $\pi_I(x)$ which has length $O(L)$. 
\end{lemma}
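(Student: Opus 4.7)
The plan is to reduce directly to Lemma \ref{unipotentpaths}. Write $x = uma$ with $u \in U_{\Phi(I)^+}$, $m \in \mathbf{M}_I(\mathcal{O}_S)$, and $a \in \partial A_I^+$, so that $\pi_I(x) = ma$. Since $\mathbf{M}_I\mathbf{A}_I$ normalizes $\mathbf{U}_{\Phi(I)^+}$, we may set $u' := (ma)^{-1}u(ma) \in U_{\Phi(I)^+}$ and rewrite $x = ma \cdot u'$. The task reduces to producing an $r$-coarse path from $mau'$ to $ma$ in $\partial R_I$ of length $O(L)$.

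By left-invariance, $d_G(u',1) = d_G(mau',ma) = d_G(x,ma)$, and the triangle inequality gives $d_G(x,ma) \leq L + d_G(ma,1)$. Because $\pi_I$ is distance-nonincreasing and $\pi_I(1)=1$, we have $d_G(ma,1) = d_G(\pi_I(x),\pi_I(1)) \leq d_G(x,1) \leq L$, so $d_G(u',1) \leq 2L$. Apply Lemma \ref{unipotentpaths} to $u'$ to obtain an $r$-coarse path $p_{u'} \subset U_{\Phi(I)^+}\mathbf{A}_I(\mathcal{O}_S)$ from $u'$ to $1$ of length $O(L)$. Left-translating by $ma$ preserves both the $r$-coarseness and the length (since $d_G$ is left-invariant) and yields a path from $x = mau'$ to $ma = \pi_I(x)$ of length $O(L)$ lying in $ma\cdot U_{\Phi(I)^+}\mathbf{A}_I(\mathcal{O}_S)$.

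The main point I expect to require care is the verification that this translated path actually lies in $\partial R_I = U_{\Phi(I)^+}\mathbf{M}_I(\mathcal{O}_S)\partial A_I^+$. Since $m$ normalizes $U_{\Phi(I)^+}$ and $\mathbf{A}_I(\mathcal{O}_S)$ commutes with both $m$ and $a$, one can rewrite
\[
  ma\cdot U_{\Phi(I)^+}\mathbf{A}_I(\mathcal{O}_S) = U_{\Phi(I)^+}\mathbf{M}_I(\mathcal{O}_S)\bigl(a\cdot \mathbf{A}_I(\mathcal{O}_S)\bigr).
\]
The product formula for $S$-units gives $|\alpha(b)| = \prod_{v \in S}|\alpha(b)|_v = 1$ for every $b \in \mathbf{A}_I(\mathcal{O}_S)$, so $\mathbf{A}_I(\mathcal{O}_S) \subset \partial A_I^+$. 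Since $\partial A_I^+$ is the kernel of the homomorphism $|\alpha(\cdot)|\colon A_I \to \mathbb{R}_{>0}$, hence a subgroup of $A_I$, and $a \in \partial A_I^+$, we conclude $a\cdot \mathbf{A}_I(\mathcal{O}_S) \subset \partial A_I^+$. This places the translated path inside $\partial R_I$ and completes the construction.
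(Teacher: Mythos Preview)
Your proof is correct and follows the same approach as the paper: decompose $x = uma$, conjugate $u$ past $ma$ to obtain $u' \in U_{\Phi(I)^+}$, and apply Lemma~\ref{unipotentpaths}. You are in fact more careful than the paper in two places---you bound $d_G(u',1)$ explicitly via the triangle inequality and the distance-nonincreasing property of $\pi_I$, and you verify via the product formula that the translated path lies in $\partial R_I$---both of which the paper leaves implicit.
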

\begin{proof}
We can write $x = uma$ for $u \in U_{\Phi(I)^+}$, $m \in\mathbf{M}_I(\mathcal{O}_S)$ and $a \in \mathbf{A}(\mathcal{O}_S)$. Then $\pi_I(x) = ma$. Note that $(\mathbf{M}_I \mathbf{A})(\mathcal{O}_S)$ normalizes $U_{\Phi(I)^+}$. So finding an $r$-coarse path from $x$ to $\pi_I(x)$ of length $O(L)$ can be reduced to the problem of finding an $r$-coarse path from $(ma)^{-1} u (ma) \in U_{\Phi(I)^+}$ to $1$ of length $O(L)$. Since $||(ma)^{-1}u(ma)|| \leq O(L)$, Lemma \ref{unipotentpaths} completes the proof.
\end{proof}

Fix some $w \in S$. Let $\mathbf{T}_I$ be a $K$-defined $K$-anisotropic torus in $\mathbf{M}_I$ such that $g\mathbf{T}_Ig^{-1}=\mathbf{M}_I\cap \mathbf{A}$. Since $\mathbf{T}_I$ is $K$-anisotropic, Dirichlet's unit's theorem tells us that $\mathbf{T}_I(\mathcal{O}_S)$ is cocompact in $T_I$, so in particular, the projection of $\mathbf{T}_I(\mathcal{O}_S)$ to $\mathbf{T}_I(K_w)$ is a finite Hausdorff distance from $\mathbf{T}_I(K_w)$. Let $\widehat{T}_I$ be the projection of $\mathbf{T}_I(\mathcal{O}_S)$ to $\mathbf{T}_I(K_w)$.

\begin{lemma} \label{toruscontraction} Suppose $\beta \in \Phi(I)^+$, so that $\mathbf{U}_{(\beta)}(K_{w}) \leqslant \mathbf{U}_{\Phi(I)^+}(K_{w})$. There is some $t \in \widehat{T}_I$ such that $gtg^{-1}$ strictly contracts $\mathbf{U}_{(\beta)}(K_{w})$.
\end{lemma}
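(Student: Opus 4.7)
The plan is to recognize $t \mapsto \beta(gtg^{-1})$ as a nontrivial algebraic character of $\mathbf{T}_I$ and use Dirichlet-cocompactness of $\widehat{T}_I$ in $\mathbf{T}_I(K_w)$ to produce a point where that character is small in $|\cdot|_w$. The translation is that ``$gtg^{-1}$ strictly contracts $\mathbf{U}_{(\beta)}(K_w)$'' means exactly $|\beta(gtg^{-1})|_w < 1$, because $gtg^{-1}\in g\mathbf{T}_Ig^{-1}=\mathbf{M}_I\cap\mathbf{A}$ lies in the $K$-split torus $\mathbf{A}$, so its conjugation action on $\mathbf{U}_{(\beta)}$ is scalar multiplication by $\beta(gtg^{-1})$.

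First, I would define $\chi:\mathbf{T}_I\to\mathbb{G}_m$ by $\chi(t)=\beta(gtg^{-1})$, i.e.\ $\chi=\beta\circ\mathrm{Int}(g)$, and show $\chi\not\equiv1$. Triviality of $\chi$ is equivalent to $\beta$ vanishing on $g\mathbf{T}_Ig^{-1}=\mathbf{M}_I\cap\mathbf{A}$. But the roots of $(\mathbf{M}_I,\mathbf{M}_I\cap\mathbf{A})$ are exactly those in $[I]\cap\Phi$, and $\beta\in\Phi(I)^+=\Phi^+-[I]$ has a strictly positive $\alpha$-coefficient in its $\Delta$-expansion. In the type $A_n$ setting (where $I=\Delta-\alpha_k$ and $\mathbf{M}_I\cap\mathbf{A}$ is the diagonal torus of $\mathrm{SL}_k\times\mathrm{SL}_{n+1-k}$), $\beta$ is of the form $e_i-e_j$ with $i\le k<j$, and $e_i-e_j$ is patently nonzero on that diagonal torus. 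Hence $\chi$ is a nontrivial $K$-rational character (possibly after passing to the splitting field for $\mathbf{T}_I$, but $\chi$ is pulled back from the $K$-rational character $\beta|_{\mathbf{M}_I\cap\mathbf{A}}$ on the $K$-split target, so it is well defined over the common field of definition).

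Next, I would analyze $|\chi|_w:\mathbf{T}_I(K_w)\to\mathbb{R}_{>0}$. Because $\mathbf{T}_I(K_w)$ is Zariski dense in $\mathbf{T}_I$, the nontriviality of $\chi$ as an algebraic character gives a continuous homomorphism $|\chi|_w$ whose image is an unbounded (hence containing elements $<1$) subgroup of $\mathbb{R}_{>0}$, provided $\mathbf{T}_I(K_w)$ has a noncompact quotient on which $\chi$ does not vanish. One then transfers this to $\widehat T_I$: by Dirichlet's unit theorem for the $K$-anisotropic torus $\mathbf{T}_I$, the group $\mathbf{T}_I(\mathcal{O}_S)$ is cocompact in $T_I=\prod_{v\in S}\mathbf{T}_I(K_v)$, and projecting to $\mathbf{T}_I(K_w)$ and using the argument of Lemma~12 of Bestvina--Eskin--Wortman shows $\widehat T_I$ is a finite Hausdorff distance from $\mathbf{T}_I(K_w)$. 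Since $|\chi|_w$ is continuous, finite Hausdorff closeness translates to bounded multiplicative distortion of $|\chi|_w$-values; picking $s\in\mathbf{T}_I(K_w)$ with $|\chi(s)|_w$ small enough and approximating by $t\in\widehat T_I$ then yields $|\chi(t)|_w<1$, as required.

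The main obstacle is step two, specifically ensuring that $|\chi|_w$ is a nontrivial homomorphism on $\mathbf{T}_I(K_w)$ — equivalently, that $\mathbf{T}_I$ is not $K_w$-anisotropic (otherwise $\mathbf{T}_I(K_w)$ is compact, every continuous $|\chi|_w$ is bounded away from $0$, and no contracting element can exist). Handling this requires either a judicious choice of the $K$-anisotropic torus $\mathbf{T}_I$ so that it becomes isotropic at the distinguished place $w$ (which in the $A_n$ case can be arranged by taking the norm-one tori of étale algebras that split at $w$), or a product-formula argument on $\mathbf{T}_I(\mathcal{O}_S)$ that produces $S$-units $\chi(\gamma)$ whose $w$-absolute value is forced to be small by compensating behavior at other places in $S$. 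Apart from this input on the choice of $\mathbf{T}_I$, the remainder of the argument is soft.
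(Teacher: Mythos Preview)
Your approach is correct and is genuinely different from the paper's. Both the paper and you begin with the same reduction: since $\widehat{T}_I$ is Hausdorff-close to $\mathbf{T}_I(K_w)$ and $g$ conjugates $\mathbf{T}_I$ onto $\mathbf{M}_I\cap\mathbf{A}$, it suffices to find some $t'\in(\mathbf{M}_I\cap\mathbf{A})(K_w)$ with $|\beta(t')|_w<1$. From there the paper carries out an explicit case analysis in the $A_n$ root system: writing $I=\Delta-\alpha_m$ and $\beta=\alpha_j+\cdots+\alpha_k$ with $j\le m\le k$, it builds a contracting $t'$ by hand inside the diagonal of $\mathbf{M}_2=\langle\mathbf{U}_{\pm\alpha_i}\rangle_{i>m}$ when $k>m$, and when $k=m$ it passes to an auxiliary $A_2$-subgroup $\mathbf{M}_3=\langle\mathbf{U}_{\pm\alpha_m},\mathbf{U}_{\pm\alpha_{m+1}}\rangle$ and uses its outer automorphism to force $|\alpha_m(a)|_w\ne 1$. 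Your route bypasses this: you observe that $\beta|_{\mathbf{M}_I\cap\mathbf{A}}$ is a nontrivial character of a $K_w$-split torus (your check that $e_i-e_j$ with $i\le m<j$ does not vanish on the diagonal of $\mathrm{SL}_m\times\mathrm{SL}_{n+1-m}$ is exactly the right verification, valid whenever $n\ge 2$), hence $|\beta|_w$ has unbounded image on $(\mathbf{M}_I\cap\mathbf{A})(K_w)$ and in particular takes values $<1$. This is shorter and conceptually cleaner; the paper's explicit construction buys nothing extra here.

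One remark on your ``main obstacle'': it is not actually an obstacle given the standing hypotheses. The element $g$ in the lemma is (as the paper makes explicit in the next proof) taken in $\mathbf{M}_I(K_w)$, so the very assumption $g\mathbf{T}_Ig^{-1}=\mathbf{M}_I\cap\mathbf{A}$ already forces $\mathbf{T}_I$ to be $K_w$-split. Thus $\mathbf{T}_I(K_w)\cong(K_w^\times)^{n-1}$ is noncompact and your character argument goes through without any further ``judicious choice'' or product-formula workaround. You can therefore drop the hedging in your final paragraph and simply invoke the hypothesis on $g$.
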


\begin{proof}
It suffices to show that there is some $t' \in \mathbf{M}_I(K_{w}) \cap \mathbf{A}(K_w)$ which strictly contracts $\mathbf{U}_{(\beta)}(K_w)$. 

We first note that since the $K$-type of $\mathbf{G}$ is $A_n$, $\Delta = \{ \alpha_1,\dots, \alpha_n\}$, and a general root $\gamma \in \Phi$ has the form $$\gamma = \pm \sum_{i = j}^k \alpha_i$$ where $1 \leq j \leq k \leq n$. 
Because $P_I$ is a maximal parabolic, $I = \Delta - \alpha_m$ for some $m$ such that $1 \leq m \leq n$.

Let $\Delta_1 = \{\alpha_1, \dots, \alpha_{m-1}\}$ and $\Delta_2 = \{\alpha_{m+1}, \dots, \alpha_n\}$. At least one of these sets must be nonempty. We will assume that $\Delta_2$ is non-empty for the sake of simplicity. We can write $\mathbf{M}_I = \mathbf{M}_1 \times \mathbf{M}_2$, where $$\mathbf{M}_1 = \langle \mathbf{U}_{(\alpha_i)}, \mathbf{U}_{(-\alpha_i)}\rangle _{i <m}$$
$$\mathbf{M}_2 = \langle \mathbf{U}_{(\alpha_i)}, \mathbf{U}_{(-\alpha_i)}\rangle _{i >m}$$

Let $\mathbf{A}_i = \mathbf{A} \cap \mathbf{M}_i$, and note that $\mathbf{P}_\emptyset \cap \mathbf{M}_i$ is a minimal parabolic subgroup of $\mathbf{M}_i$, $\mathbf{A}_i$ is a maximal $K$-split torus in $\mathbf{P}_\emptyset\cap\mathbf{M}_i$, and $\Delta_i$ is the set of simple roots with respect to $\mathbf{A}_i$.

Since $\beta \in \Phi(\Delta-\alpha_m)^+$, we know that $$\beta = \alpha_{j} + \dots + \alpha_m + \dots +\alpha_{k}$$ for fixed choices of $j$ and $k$ such that $1 \leq j \leq m \leq k \leq n$. 

Suppose that $k > m$, and choose $a \in \mathbf{A}^+_2(K_w)$ such that $|\alpha_i(a)|_w <1$ for all $\alpha_i \in \Delta_2$. Note that $|\alpha_i(a)|_w = 1$ for $\alpha_i \in \Delta_1$, since $a \in \mathbf{M}_2(K_w)$.

Conjugation by $a$ acts on $\mathbf{U}_{(\beta)}(K_w)$ by scalar multiplication by the constant $$C= \prod_{i=j}^k|\alpha_i(a)|_w$$
By our choice of $a$, we know that $C= |\alpha_m(a)|_wC'$ where $C' < 1$. If $|\alpha_m(a)|_w < \frac{1}{C'}$, then $C<1$, and $a$ contracts $\mathbf{U}_{(\beta)}(K_w)$ by a factor of $C$. If $|\alpha_m(a)|_w > \frac{1}{C'}$, then $C>1$ and $a^{-1}$ contracts $\mathbf{U}_{(\beta)}(K_w)$ by a factor of $\frac{1}{C}$. (Note that either $a$ or $a^{-1}$ must contract $U_{(\gamma)}(K_w)$ for any other $\gamma \in \Phi(I)^+$ with $k > m$.)

If $C = 1$, choose $a' \in \cap_{i = 1}^m \ker(\alpha_i)$ such that $|\alpha_i(a')|_w \leq 1$ for all $\alpha_i \in \Delta_2$ and $|\alpha_k(a')|_w < 1$. Note that $$\prod_{i=j}^k |\alpha_i(aa')|_w = C \prod_{i=m+1}^k |\alpha_i(a')|_w < C$$ so $aa'$ contracts $\mathbf{U}_{(\beta)}(K_w)$.

If $\beta = \alpha_j + \dots + \alpha_m$, a different approach is required. Consider the group $$\mathbf{M}_3 = \langle \mathbf{U}_{\alpha_m}, \mathbf{U}_{-\alpha_m}, \mathbf{U}_{\alpha_{m+1}}, \mathbf{U}_{-\alpha_{m+1}}\rangle$$
and let $\mathbf{A}_3 = \mathbf{M}_3 \cap \mathbf{A}$. Note that $\Delta_3 = \{ \alpha_m, \alpha_{m+1}\}$ is the set of simple roots of $\mathbf{M}_3$, and the $K$-type of $\mathbf{M}_3$ is $A_2$. Furthermore, $\alpha_m$ determines a maximal parabolic subgroup $\mathbf{P}^* \leqslant\mathbf{M}_3$, with $\ker(\alpha_m) =\mathbf{P}^*\cap \mathbf{A}_3$.

Let $L = \langle \mathbf{U}_{\alpha_{m+1}}(K_w), \mathbf{U}_{-\alpha_{m+1}}(K_w)\rangle$, and choose $a \in L\cap \mathbf{A}_3(K_w)$ with $|\alpha_{m+1}(a)|_w < 1$. We argue that $a$ contracts $\mathbf{U}_{(\beta)}(K_w)$. Since $L \cap \mathbf{A}_1(K_w)$ is trivial, $|\alpha_i(a)|_w = 1$ for all $i < m$. So the action of $a$ on $\mathbf{U}_{(\beta)}(K_w)$ depends only on $|\alpha_m(a)|_w$. Let $\phi$ be the $K$-automorphism of $\mathbf{M}_3$ which stabilizes $\mathbf{A}_3$ and transposes $\mathbf{P}^*$ with its opposite with respect to $\mathbf{A}_3$. Note that $\ker(\alpha_m) \cap L$ is trivial, since $\phi$ preserves $L$ but does not preserve $\mathbf{P}^*$.
 Therefore, $|\alpha_m(a)|_w \neq 1$, and after possibly replacing $a$ by its inverse, we find that $a$ contracts $\mathbf{U}_{(\beta)}(K_w)$ by a factor of $|\alpha_m(a)|_w$.

\end{proof}

\begin{lemma} \label{fillinsubgroup}
The Dehn function of $U_{\Phi(I)^+} \widehat{T}_I\mathbf{A}_I(\mathcal{O}_S)$ is quadratic.
\end{lemma}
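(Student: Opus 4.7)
The plan is to view $H = U_{\Phi(I)^+}\widehat{T}_I\mathbf{A}_I(\mathcal{O}_S)$ as a standard $S$-arithmetic solvable group in the sense of Cornulier--Tessera, and to invoke their criterion for quadratic Dehn function after verifying the requisite tameness of the torus action on the unipotent radical.

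\emph{Structure.} First, I would present $H$ as a semidirect product $N \rtimes A$, where $N = U_{\Phi(I)^+} = \prod_{v \in S} \mathbf{U}_{\Phi(I)^+}(K_v)$ is the unipotent factor, which further decomposes as an $A$-module into root subgroups $\mathbf{U}_{(\beta)}(K_v)$ indexed by $\beta \in \Phi(I)^+$ and $v \in S$, and $A = \widehat{T}_I \mathbf{A}_I(\mathcal{O}_S)$ is a finitely generated abelian group of rank $\geq 2$ (using Dirichlet's $S$-unit theorem applied to the one-dimensional split torus $\mathbf{A}_I$, together with the cocompactness statement preceding Lemma \ref{toruscontraction} for the anisotropic $\widehat{T}_I$). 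Each $a \in A$ acts on $\mathbf{U}_{(\beta)}(K_v)$ as scalar multiplication by $|\beta(a)|_v$, so the setup is a diagonalizable abelian action on a sum of one-parameter root subgroups -- precisely the shape of a standard solvable group.

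\emph{Contractions.} Next, I would verify that for every pair $(\beta, v)$ with $\beta \in \Phi(I)^+$ and $v \in S$, some element of $A$ strictly contracts $\mathbf{U}_{(\beta)}(K_v)$. For $v = w$ this is exactly Lemma \ref{toruscontraction}. For $v \neq w$, since $\Delta - I = \{\alpha\}$ and every $\beta \in \Phi(I)^+$ has positive $\alpha$-coefficient, Lemma \ref{hausdist} applied with $S' = \{v\}$ produces $a \in \mathbf{A}_I(\mathcal{O}_S)$ with $|\alpha(a)|_v < 1$ and hence $|\beta(a)|_v < 1$. These contractions collectively realize each root direction as a weight lying on a half-line in $\mathrm{Hom}(A, \mathbb{R})$ that is strictly negative on some element of $A$.

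\emph{Application.} I would then invoke Cornulier--Tessera's theorem on quadratic Dehn functions for compactly presented standard solvable groups with a $2$-tame abelian action. The weight system $\{\log|\beta(\cdot)|_v : \beta \in \Phi(I)^+,\ v \in S\} \subset \mathrm{Hom}(A, \mathbb{R})$ together with the rank of $A$ and the contractions above places $H$ within that framework, yielding $\delta_H(x) = O(x^2)$.

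\emph{Main obstacle.} The principal technical point is verifying the full $2$-tameness hypothesis: that each weight $\log|\beta(\cdot)|_v$ can be written as a positive linear combination of at least two other weights in the system, so that relations in any one root group admit quadratic fillings using the others. This requires tracking the characters of $\mathbf{A}_I(\mathcal{O}_S)$ (which are constrained by the product formula $\prod_v |\beta(a)|_v = 1$) together with the additional expansion/contraction freedom supplied by $\widehat{T}_I$ at the place $w$, and exploiting the $A_n$-type combinatorics of $\Phi(I)^+$ to ensure no weight is extremal in the convex cone generated by the others.
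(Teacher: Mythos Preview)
Your overall framework is exactly the paper's: write $H$ as $U_{\Phi(I)^+}\rtimes(\widehat{T}_I\mathbf{A}_I(\mathcal{O}_S))$ with $U_{\Phi(I)^+}$ abelian (using that the $K$-type is $A_n$), and then apply Theorem~3.1 of Cornulier--Tessera. However, the hypothesis of that theorem is not what you state. What must be checked is that for every \emph{pair} of coordinate subgroups $\mathbf{U}_{(\beta_1)}(K_v)$ and $\mathbf{U}_{(\beta_2)}(K_{v'})$ there is an element of $\widehat{T}_I\mathbf{A}_I(\mathcal{O}_S)$ contracting both simultaneously (equivalently, any two weights lie in a common open half-space). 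Your ``Contractions'' paragraph only establishes that each single $(\beta,v)$ is contracted by some element, and your description of $2$-tameness in the ``Main obstacle'' paragraph --- that each weight be a positive combination of at least two others --- is not the relevant condition.

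The gap matters precisely in the case $|S|=2$ with $v\neq v'$, which is the heart of the lemma. There the product formula forces any $a\in\mathbf{A}_I(\mathcal{O}_S)$ contracting $\mathbf{U}_{(\beta_2)}(K_{v'})$ to expand $\mathbf{U}_{(\beta_1)}(K_v)$, so $\mathbf{A}_I(\mathcal{O}_S)$ alone cannot produce the simultaneous contraction. The paper's argument is: take $v=w$, conjugate so that $\widehat{T}_I$ is diagonal, use Lemma~\ref{toruscontraction} to get $t\in\widehat{T}_I$ contracting $\mathbf{U}_{(\beta_1)}(K_w)$ while acting trivially on the $v'$-factor, and then multiply a suitable power $t^k$ against $a$ to overcome the expansion at $w$ without disturbing the contraction at $v'$. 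This combination step is the missing idea in your plan; once you replace your formulation of the obstacle with the pairwise-contraction condition, this is exactly what remains to be done.
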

\begin{proof}
We observe that $\widehat{T}_I\mathbf{A}_I(\mathcal{O}_S)$ is a free abelian group. Also, $U_{\Phi(I)^+}$ is normalized by $\widehat{T}_I\mathbf{A}_I(\mathcal{O}_S)$, and since the $K$-type of $\mathbf{G}$ is $A_n$, $U_{\Phi(I)^+}$ is abelian and $\mathbf{U}_{\Phi(I)^+}(K_v)$ isomorphic to a direct sum of one or more copies of $K_{v}$. 

Therefore, $U_{\Phi(I)^+}\widehat{T}_I\mathbf{A}_I(\mathcal{O}_S)$ can be written as $$\bigoplus_{v\in S} \mathbf{U}_{\Phi(I)^+}(K_v) \rtimes \widehat{T}_I\mathbf{A}_I(\mathcal{O}_S)$$
By Theorem 3.1 in \cite{CornulierTessera2010}, it suffices to show that for any two unipotent coordinate subgroups, $\mathbf{U}_{(\beta_1)}(K_{v})$ and $\mathbf{U}_{(\beta_2)}(K_{v'})$, of $U_{\Phi(I)^+}$, there is some element of  $\widehat{T} \mathbf{A}_I(\mathcal{O}_S)$ which simultaneously contracts $\mathbf{U}_{(\beta_1)}(K_{v})$ and $\mathbf{U}_{(\beta_2)}(K_{v'})$.

If $v = v'$, then $\mathbf{U}_{(\beta_1)}(K_{v})$ and $\mathbf{U}_{(\beta_2)}(K_{v'})$ are contained in the same factor of $U_{\Phi(I)^+}$. By Lemma \ref{hausdist}, there is some $a \in \mathbf{A}_I(\mathbf{O}_S)$ which simultaneously contracts $\mathbf{U}_{(\beta_1)}(K_{v})$ and $\mathbf{U}_{(\beta_2)}(K_{v'})$.

If $v \neq v'$, then $\mathbf{U}_{(\beta_1)}(K_{v})$ and $\mathbf{U}_{(\beta_2)}(K_{v'})$ are in different factors of $U_{\Phi(I)^+}$. In this case, either $|S| \geq 3$ or $|S| = 2$. If $|S|\geq 3$, then we may again apply Lemma \ref{hausdist} to obtain $a \in \mathbf{A}_I(\mathcal{O}_S)$ which simultaneously contracts $\mathbf{U}_{\Phi(I)^+}(K_v) \times \mathbf{U}_{\Phi(I)^+}(K_{v'})$. 

If $|S| = 2$, we may assume that $v = w$.
Let $g \in \mathbf{M}_I(K_{w})\times \{1\}$ be the element which diagonalizes $\widehat{T}_I$. Note that $g$ commutes with $\mathbf{A}_I(\mathcal{O}_S)$ and normalizes $U_{\Phi(I)^+}$, so $U_{\Phi(I)^+} \widehat{T}_I\mathbf{A}_I(\mathcal{O}_S)$ is conjugate to $U_{\Phi(I)^+} (g\widehat{T}_Ig^{-1})\mathbf{A}_I(\mathcal{O}_S)$, and it suffices to prove the lemma for the latter group.

By Lemma \ref{toruscontraction}, there is some $gtg^{-1} \in g\widehat{T}_Ig^{-1}$ which contracts $\mathbf{U}_{(\beta_1)}(K_{w})$ and commutes with $\mathbf{U}_{(\beta_2)}(K_{v'})$.
There is some $a \in \mathbf{A}_I(\mathcal{O}_S)$ which contracts $\mathbf{U}_{(\beta_2)}(K_{v'})$. If $a$ expands $\mathbf{U}_{(\beta_1)}(K_{w})$, then there is a positive power of $gtg^{-1}$ such that $gt^kg^{-1}a$ simultaneously contracts $\mathbf{U}_{(\beta_1)}(K_{w})$ and $\mathbf{U}_{(\beta_2)}(K_{v'})$.
\end{proof}

\begin{proof}[Proof of Proposition \ref{parabolicprop} for maximal parabolics]

Since $\pi_I$ is distance nonincreasing, $\pi_I(\Sigma)$ is a $2$-manifold in $\partial R_I$ with area $O(L^2)$, so if we can create an annulus between $\partial \Sigma$ and $\pi_I(\partial\Sigma)$ which has area $O(L^3)$, then taking $\Sigma'$ to be the union of this annulus with $\pi_I(\Sigma)$ completes the proof.
By Lemma \ref{pathstom}, there is a path from each point in $\partial\Sigma$ to its image in $\pi_I(\partial\Sigma)$ which has length $O(L)$. Two adjacent points in $\partial\Sigma$, along with their images in $\pi_I(\partial\Sigma)$ and these two paths give a loop of length $O(L)$ in $U_{\Phi(I)^+} \mathbf{A}_I(\mathcal{O}_S) B$ where $B$ is a ball in $\mathbf{M}_I(\mathcal{O}_S)$ of radius $r$ around $1$. 
Note that this subset of $G$ is quasi-isometric to $U_{\Phi(I)^+} \mathbf{A}_I(\mathcal{O}_S)$, and by Lemma \ref{fillinsubgroup}, these loops have quadratic fillings in $\partial R_I$. 
Since there are $O(L)$ such loops formed by adjacent pairs of points in $\partial\Sigma$, this gives an annulus $\mathcal{A}$ with $\partial \mathcal{A}= \partial\Sigma \cup \pi_I(\partial\Sigma)$, and $area(\mathcal{A}) = O(L^3)$, completing the proof.
\end{proof}

\bibliography{general}
\bibliographystyle{alpha}
\end{document}